\documentclass[11pt,reqno]{amsart}

\usepackage[utf8]{inputenc}
\usepackage[T1]{fontenc}
\usepackage{lmodern}
\usepackage{amsmath,amssymb,amsthm,amscd,mathtools}
\usepackage[mathscr]{euscript}
\usepackage{mathrsfs}
\usepackage{enumitem}
\usepackage{xcolor}
\usepackage[colorlinks=true,linkcolor=blue,citecolor=black,urlcolor=black]{hyperref}
\usepackage{aliascnt}
\usepackage[nameinlink,noabbrev]{cleveref}

\theoremstyle{plain}
\newtheorem{theorem}{Theorem}[section]

\newaliascnt{lemma}{theorem}
\newtheorem{lemma}[lemma]{Lemma}
\aliascntresetthe{lemma}
\crefname{lemma}{lemma}{lemmas}
\Crefname{lemma}{Lemma}{Lemmas}

\newaliascnt{proposition}{theorem}
\newtheorem{proposition}[proposition]{Proposition}
\aliascntresetthe{proposition}
\crefname{proposition}{proposition}{propositions}
\Crefname{proposition}{Proposition}{Propositions}

\newaliascnt{corollary}{theorem}
\newtheorem{corollary}[corollary]{Corollary}
\aliascntresetthe{corollary}
\crefname{corollary}{corollary}{corollaries}
\Crefname{corollary}{Corollary}{Corollaries}

\theoremstyle{definition}
\newaliascnt{definition}{theorem}
\newtheorem{definition}[definition]{Definition}
\aliascntresetthe{definition}
\crefname{definition}{definition}{definitions}
\Crefname{definition}{Definition}{Definitions}

\newaliascnt{remark}{theorem}
\newtheorem{remark}[remark]{Remark}
\aliascntresetthe{remark}
\crefname{remark}{remark}{remarks}
\Crefname{remark}{Remark}{Remarks}

\newaliascnt{question}{theorem}

\aliascntresetthe{question}
\crefname{question}{question}{questions}
\Crefname{question}{Question}{Questions}

\newcommand{\N}{\mathbb N}
\newcommand{\R}{\mathbb R}
\newcommand{\C}{\mathbb C}

\newcommand{\norm}[1]{ \| #1 \| }

\newcommand{\restr}[2]{\left.#1\right|_{#2}}

\subjclass[2020]{Primary 46B03; Secondary 46B20, 46E15, 47B38}

\keywords{primary Banach space, primary factorisation property, vector-valued continuous functions, representing measures, compact metrizable spaces}

\begin{document}

\title{The Uniform Primary Factorisation Property for $C(K,E)$}

\author[A. Acuaviva]{Antonio Acuaviva}
\address{School of Mathematical Sciences,
Fylde College,
Lancaster University,
LA1 4YF,
United Kingdom} \email{ahacua@gmail.com}

\author[P. Acuaviva]{Pablo Acuaviva}
\address{Institute of Computer Science,
University of Bern,
Neubrückstrasse 10,
3012 Bern,
Switzerland} \email{pablohacuaviva@gmail.com}

\date{\today}

\begin{abstract}
    Let $K$ be an uncountable compact metrizable space, and $E$ be a separable Banach space with the uniform primary factorisation property (UPFP) and containing no isomorphic copy of $c_0$. We prove that $C(K, E)$ also has the UPFP. As a consequence, we obtain the primariness of the bi-parameter spaces $C([0,1],\ell_p)$ and $C([0,1], L_p)$ for $1 \leq p<\infty$.
\end{abstract}

\maketitle

\tableofcontents

\bigskip
\section{Introduction and organisation}\label{sec:introduction}

A Banach space $X$ is said to be \emph{primary} if, whenever $P\colon X\to X$ is a projection, either $PX$ or $(I_X-P)X$ is isomorphic to $X$. The primariness problem for classical Banach spaces has played an important role in the isomorphic theory of Banach spaces. 

A key emerging property in the study of primariness is the (uniform) primary factorisation property (U)PFP; see \Cref{def:upfp}. Roughly speaking, the PFP asks that, for every $T\in\mathscr{B}(X)$, the identity on $X$ factors through either $T$ or $I_X-T$, while the UPFP asks for uniform control of the factorisation constants. This point of view was isolated in the work of Acuaviva and Kania \cite{AcuavivaKania2026} as a way of turning scalar primariness arguments into Banach-valued ones: scalar lower-bound alternatives are replaced by uniform factorisation alternatives.

There is now a substantial literature on primariness questions for bi-parameter Banach spaces, that is, spaces built from two classical structures, such as $\ell_p(\ell_q)$, $L_p(L_q)$, $\ell_p(L_q)$ and $L_p(\ell_q)$. Many of these spaces have been treated by methods based on bases, martingales, Haar systems, or block decompositions; see, for instance, the work of Capon \cite{Capon1980,Capon1980b,Capon1982PLMS,Capon1983}, Wark \cite{Wark2007}, M\"uller \cite{Muller2012}, and the recent results of Lechner--Motakis--M\"uller--Schlumprecht \cite{LMMSS2022}.

Since its introduction, the UPFP has proved useful for primariness questions in bi-parameter settings, as it packages factorisation dichotomies into a form that is well suited to permanence arguments, allowing, in some circumstances, to lift scalar-valued proofs to the bi-parameter case. The first-named author used this point of view to prove the primariness of spaces of the form $\ell_p(C(K))$ for $1\leq p\leq\infty$ \cite{Acuaviva2026PrimarinesslpCK}. In subsequent work, related UPFP transfer principles were used to recover the primariness of $\ell_\infty(L_p)$ and $c_0(L_p)$ for $1\leq p<\infty$ \cite{Acuaviva2026PrimarinessSums}.

Our main result continues this line of work in the setting of vector-valued continuous function spaces $C(K,E)$.  We prove the following permanence theorem for the UPFP.

\begin{theorem}\label{thm:transfer}
Let $K$ be an uncountable compact metrizable space and $E$ be a separable Banach space containing no isomorphic copy of $c_0$. If $E$ has the UPFP, then $C(K, E)$ has the UPFP.
\end{theorem}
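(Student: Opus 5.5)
By Milutin's theorem, $C(K,E)\cong C(\Delta,E)$, where $\Delta$ is the Cantor set. The argument passes through this isomorphism: since $K$ is uncountable compact metrizable, $C(K)\cong C(\Delta)$, and tensoring with $E$ (injective tensor product) gives $C(K,E)\cong C(\Delta,E)$. The UPFP is an isomorphic invariant, with constants changing only by the isomorphism constants, so it suffices to treat $K=\Delta$. I would then model the argument on the classical proof that $C(\Delta)$ is primary, in the Lindenstrauss--Pełczyński / Rosenthal style.

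Given $T\in\mathscr B(C(\Delta,E))$, I would use the representing-measure description. For each $t\in\Delta$ and $x\in E$, set $\mu_t(\cdot)x := $ the operator-valued measure representing $\delta_t\circ T$ on $E$-valued functions. Its atom at $t$ is $\mu_t(\{t\})\in\mathscr B(E)$, and the map $t\mapsto \mu_t(\{t\})$ is "diagonal" data. The hypothesis that $E$ contains no $c_0$ enters here: by the Bessaga--Pełczyński / Brooks--Dinculeanu theory, operators on $C(\Delta,E)$ have representing measures that are countably additive in the strong operator topology. So the atoms are well defined, and the non-atomic parts $\mu_t-\mu_t(\{t\})\delta_t$ can be made uniformly small on small clopen sets along a suitable subsystem.

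I would then carry out the key steps in order.

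\textbf{Step 1.} For each $t$, apply the UPFP of $E$ to $S_t:=\mu_t(\{t\})$. This produces operators $A_t,B_t$ with $\|A_t\|\|B_t\|\le C$ and $I_E=A_tS_tB_t$ or $I_E=A_t(I-S_t)B_t$.

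\textbf{Step 2.} By separability of $E$ and a Baire-category or uncountability argument, find an uncountable, hence Cantor-set-containing, closed set $\Delta_0\subseteq\Delta$ on which the same alternative holds. On $\Delta_0$ the factors should also be approximately constant in the strong operator topology on a dense sequence of $E$. The precise selection mechanism needs to be checked, since $A_t,B_t$ depend on $t$ non-measurably.

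\textbf{Step 3.} Build a Cantor tree of clopen sets and points $t_s\in\Delta_0$, together with a disjointly supported peaked system of $E$-valued functions $f_s\otimes x$. This tree spans a copy of $C(\Delta,E)$, complemented uniformly, on which $T$ (or $I-T$) acts approximately as $f\otimes x\mapsto f\otimes S x$ with fixed $S$. Smallness of the diffuse parts makes the error a small perturbation.

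\textbf{Step 4.} Compose with $I_{C(\Delta)}\otimes A$ and $I_{C(\Delta)}\otimes B$ to factor the identity of $C(\Delta,E)$ through $T$ or $I-T$. The constant depends only on $C$ and the Milutin constants.

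The main obstacle is Steps 2 and 3 together: controlling the non-atomic parts of the operator-valued measures uniformly enough to build the peaked tree. This is where the absence of $c_0$ is essential, since otherwise strong countable additivity fails. I would first try to prove a vector-valued analogue of Rosenthal's disjointness lemma. Concretely: for the $\ell_1$-bounded family $\{|\mu_t|(U_n)x\}$ over disjoint clopens $U_n$, smallness for most $n$ should hold uniformly over $x$ in a finite net. I would then run the diagonal selection over the dense sequence of $E$, passing to a Cantor subset at each stage.
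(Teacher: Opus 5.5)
There is a genuine gap in Steps 3--4. You want $T$ (or $I-T$), on a complemented copy of $C(\Delta,E)$ spanned by a scalar peaked tree tensored with $E$, to act approximately as the \emph{constant} operator $f\otimes x\mapsto f\otimes Sx$. You then factor with the constant operators $I_{C(\Delta)}\otimes A$ and $I_{C(\Delta)}\otimes B$. Since the compression is read off at the points $t_s$, this requires the atoms $\mu_{t_s}(\{t_s\})$ to be norm-close to a single $S$. In the scalar case one can arrange this because $\mathbb{K}$ is separable. However, $\mathscr{B}(E)$ is not norm-separable, and the requirement already fails within the hypotheses of the theorem. Take $E=\ell_2$, $\Delta=\{0,1\}^{\N}$ and $\Lambda_tx=(t_nx_n)_n$. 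The field $t\mapsto\Lambda_t$ is strongly continuous, so $(Tf)(t)=\Lambda_tf(t)$ is an operator on $C(\Delta,\ell_2)$ with atom field $t\mapsto\Lambda_t$. Yet $\norm{\Lambda_t-\Lambda_s}=1$ for all $t\neq s$. Making the pointwise factors approximately constant in the strong operator topology on a dense sequence, as in your Step 2, does not help, because the final inversion needs an operator-norm estimate. The missing idea is to keep the $t$-dependence and use operator-valued multipliers. A strongly continuous bounded field $R\colon D\to\mathscr{B}(E)$ gives a bounded operator $(M_Rf)(t)=R(t)f(t)$ on $C(D,E)$ with $\norm{M_R}=\sup_t\norm{R(t)}$. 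The paper shows that on a suitable Cantor set $D$, the compression $Q_DTJ_D$ is within $\eta$ in norm of $M_A$, where $A(t)=m_t(\{t\})$. Here $Q_D$ is restriction and $J_D$ is a Dugundji extension supported in an open set $O\supseteq D$. It also produces strongly continuous $L,R$ with $L(t)A_\sigma(t)R(t)=I_E$ on $D$ for a fixed $\sigma$. Then $\norm{M_LQ_DT_\sigma J_DM_R-I_{C(D,E)}}\leq C\eta=1/4$ for $\eta=1/(4C)$, and a Neumann series finishes.

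The two points you leave open are resolved differently from what you suggest. For the selection, first make $A$ strongly continuous on a Cantor set. It is the pointwise strong limit of the strongly continuous fields $A_n(t)x=T(\varphi_n(t,\cdot)x)(t)$, hence Borel, and then Lusin's theorem and the perfect set theorem apply. The sets $\Gamma_\sigma=\{(t,L,R):LA_\sigma(t)R=I_E\}$ are then closed in $P\times\mathscr{B}_{\sqrt C}(E)^2$, which is Polish in the strong operator topology because $E$ is separable. Their projections are analytic and cover $P$, so one of them contains a Cantor set $Q$. Jankov--von Neumann uniformisation gives universally measurable selectors $L,R$ on $Q$. Lusin's theorem for a nonatomic measure, plus the perfect set theorem, then gives a Cantor set $D$ on which $L,R$ are strongly continuous. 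For the diffuse parts, no Rosenthal-type disjointness lemma is needed. Your formulation via $|\mu_t|$ does not work as stated, since operator-valued representing measures need only have bounded semivariation. For instance, for $c\in\ell_2\setminus\ell_1$ and distinct $s_n$, the map $f\mapsto\sum_nc_n\langle f(s_n),e_1\rangle e_n$ sends $C(K,\ell_2)$ into $\ell_2$, and its representing measure has infinite variation. Instead, absence of $c_0$ makes each $m_t$ strongly bounded, so $\widetilde m_t(V\setminus\{t\})\to0$ as balls $V$ shrink to $t$. This bounds $\rho_V(t)=\sup\{\norm{(Tf)(t)-A(t)f(t)}:\norm{f}\leq1,\ f=0\text{ off }V\}$ uniformly in $f$. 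Pigeonholing over a countable base then yields one open $O$ for which $\{t\in O:\rho_O(t)<\eta\}$ is uncountable and Borel. This control, uniform in $f$ for a fixed $O$, is what gives $\norm{Q_DTJ_D-M_A}\leq\eta$.
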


In particular, this gives new primariness results for bi-parameter spaces involving $C[0,1]$. This answers some of the questions of Lechner--Motakis--M\"uller--Schlumprecht \cite{LMMSS2022} concerning primariness of mixed classical Banach spaces involving $C(K)$ spaces, for $K$ metric and compact. We obtain the following.

\begin{corollary}\label{cor:ellp-Lp}
For $1\leq p<\infty$, the spaces $C([0,1],\ell_p)$ and $C([0,1],L_p)$ are primary.
\end{corollary}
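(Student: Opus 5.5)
The corollary follows from \Cref{thm:transfer} combined with two standard facts: the UPFP implies primariness, and the spaces $\ell_p$ and $L_p$ ($1\le p<\infty$) have the UPFP.

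\emph{Step 1: the hypotheses of \Cref{thm:transfer}.} Take $K=[0,1]$, which is uncountable, compact and metrizable. Let $E=\ell_p$ or $E=L_p$ with $1\le p<\infty$; both are separable. Neither contains an isomorphic copy of $c_0$. For $1<p<\infty$ this holds because $E$ is reflexive. For $p=1$ it holds because $L_1$ is weakly sequentially complete, and so is its subspace $\ell_1$.

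\emph{Step 2: $E$ has the UPFP.} This is the one step that needs an input from outside the present paper. For $\ell_p$ it follows from the classical argument: given $T\in\mathscr{B}(\ell_p)$, pass to a block subsequence of the unit vector basis on which $T$ or $I-T$ is an isomorphism. The resulting constants depend only on $p$, which gives uniformity. For $L_p$ I would quote the Enflo--Maurey primariness argument in the quantitative form recorded by Acuaviva and Kania \cite{AcuavivaKania2026}. That argument works with the Haar system and Gamlen--Gaudet type constructions, and it shows that the identity factors through $T$ or $I-T$ with constants depending only on $p$. For $p=1$ the corresponding input is Enflo--Starbird, again taken in its uniform form from \cite{AcuavivaKania2026}. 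I expect this citation to be the main point to check: it must be confirmed that the cited references really state the \emph{uniform} version for all $1\le p<\infty$, including $p=1$.

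\emph{Step 3: conclusion.} \Cref{thm:transfer} now gives that $X=C([0,1],E)$ has the UPFP, and in particular the PFP. The remaining step is PFP $\Rightarrow$ primary, via Pełczyński decomposition. Let $P$ be a projection on $X$. By the PFP, the identity factors through $P$ or through $I-P$; say $I_X=APB$. Then $BA$ restricted to suitable ranges shows that $X$ is isomorphic to a complemented subspace of $PX$. Conversely, $PX$ is complemented in $X$.

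To apply the decomposition method we also need $X\cong c_0(X)$ or $X\cong X\oplus X$ in a suitable form. For the direct-sum version, observe that $C([0,1],E)\cong C([0,1]\sqcup[0,1],E)\cong X\oplus X$. Milutin's theorem gives $C([0,1])\cong C(K)$ for uncountable compact metric $K$, and the injective tensor product $C(K,E)=C(K)\check\otimes E$ carries this isomorphism over. With complementation in both directions, Pełczyński's argument yields $PX\cong X$.

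Alternatively, if the paper has already recorded this implication (PFP plus $X\cong X\oplus X$ implies primary), I would cite it directly.
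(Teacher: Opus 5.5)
Your overall route is the paper's: check the hypotheses of \Cref{thm:transfer}, then pass from the UPFP to primariness by Pe{\l}czy{\'n}ski's decomposition method. Your no-$c_0$ argument is fine, and it avoids the paper's detour through Sobczyk's theorem, since reflexivity and weak sequential completeness pass to closed subspaces.

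The gap is in Step~3. From the PFP you correctly get that $X=C([0,1],E)$ is isomorphic to a complemented subspace of $Y=PX$: if $I_X=APB$, then $PB\colon X\to Y$ and $A|_{Y}\colon Y\to X$ satisfy $A|_{Y}PB=I_X$. Also $Y$ is complemented in $X$. But complementation in both directions together with $X\cong X\oplus X$ alone does \emph{not} give $Y\cong X$. The direct-sum form of the decomposition method also needs $Y\cong Y\oplus Y$, and you know nothing of that kind about an arbitrary range $Y=PX$. The one-sided version is false in general. Gowers and Maurey constructed a space $U$ with $U\cong U\oplus U\oplus U$ but $U\not\cong U\oplus U$. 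Then $V=U\oplus U$ satisfies $V\cong V\oplus V$, and $U$ and $V$ are each complemented in the other, yet $U\not\cong V$.

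The repair is to use the other hypothesis you mention, $X\cong c_0(X)$. It follows from the tools you already invoke. Since $[0,1]\times(\omega+1)$ is uncountable compact metrizable, Milutin's theorem and the injective tensor product give
\begin{equation*}
    X\cong C([0,1]\times(\omega+1),E)\cong C([0,1],c(E))\cong C([0,1],c_0(E))\cong c_0(X).
\end{equation*}
Here $c(E)=C(\omega+1,E)$ is the space of convergent sequences in $E$, and $c(E)\cong c_0(E)\oplus E\cong c_0(E)$. The last isomorphism holds because compact subsets of $c_0(E)$ have uniformly small tails. Now write $X\cong Y\oplus Z$ and $Y\cong X\oplus W$. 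Then
\begin{equation*}
    X\oplus Y\cong c_0(Y)\oplus c_0(Z)\oplus Y\cong c_0(Y)\oplus c_0(Z)\cong c_0(X)\cong X,
\end{equation*}
and also $X\oplus Y\cong X\oplus X\oplus W\cong X\oplus W\cong Y$. Hence $Y\cong X$; this is the ``standard application'' the paper has in mind.

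On the citation you flagged in Step~2: the paper invokes \cite{AcuavivaKania2026} only for $\ell_p$. For $L_p$ it extracts the uniform factorisation from Alspach--Enflo--Odell \cite{AEO1977} when $1<p<\infty$ and from Capon \cite{Capon1980} when $p=1$. Alternatively, it uses \cite[Theorem~3.1]{LechnerSpeckhofer2025} for all $1\leq p<\infty$ at once. Your attribution of the $L_p$ case to \cite{AcuavivaKania2026} should be replaced accordingly.
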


\begin{proof}
    For $1\leq p<\infty$, the spaces $\ell_p$ and $L_p$ are separable and contain no isomorphic copy of $c_0$: by Sobczyk's theorem, it is enough to rule out complemented copies of $c_0$, which is immediate from reflexivity when $1<p<\infty$ and from weak sequential completeness when $p=1$.

    Moreover, $\ell_p$ has the UPFP by \cite[Section 3]{AcuavivaKania2026}. For $L_p$, the required quantitative factorisation statement can be extracted from the primariness argument of Alspach, Enflo and Odell \cite{AEO1977} when $1<p<\infty$, and from Capon's argument \cite{Capon1980} when $p=1$; alternatively, it follows for all $1\leq p<\infty$ from the factorisation theorem of Lechner and Speckhofer \cite[Theorem~3.1]{LechnerSpeckhofer2025}. Therefore \Cref{thm:transfer} applies to $E=\ell_p$ and to $E=L_p$, with $K=[0,1]$. It follows that $C([0,1],\ell_p)$ and $C([0,1],L_p)$ have the UPFP, and hence are primary by a standard application of Pe{\l}czy{\'n}ski's decomposition method.
\end{proof}

\subsection{Proof strategy and organisation}

It is useful to begin with the ideas in the scalar case, in other words, when $E = \mathbb{K}$, the scalar field. Given an operator $T\colon C(K)\to C(K)$, one composes with point evaluations and uses the Riesz representation theorem to represent each functional
\begin{equation*}
    \mu_t = \delta_tT\colon C(K)\longrightarrow \mathbb{K}
\end{equation*}
by the scalar measure $\mu_t$. The diagonal atom of this measure gives a scalar function, say $g(t) = \mu_t(\{t\})$. After a localisation step, the operator $T$ is then, on a suitable Cantor set, close to the multiplication operator $f\mapsto gf$. For a scalar multiplier, the primariness dichotomy is transparent: since
\begin{equation*}
    g(t)+(1-g(t))=1,
\end{equation*}
at each point one of $g(t)$ and $1-g(t)$ has modulus at least $1/2$. After passing to a suitable further set, this gives a multiplier bounded away from zero, and hence a local inverse needed to factor the identity through either $T$ or $I-T$.

The proof below follows this idea, but with the scalar multiplier replaced by an operator-valued multiplier. The Riesz representation theorem is replaced by the vector-valued representing measure theorem recalled in \Cref{sec:preliminaries}. Thus, for $T\in\mathscr{B}(C(K,E))$ and $t\in K$, we consider
\begin{equation*}
    \delta_tT\colon C(K,E)\longrightarrow E,\qquad f\longmapsto (Tf)(t).
\end{equation*}
Since $E$ contains no isomorphic copy of $c_0$, these operators are unconditionally converging, and \Cref{lem:no-c0-rows} applies. If $m_t$ denotes the representing measure of $\delta_tT$, the diagonal atom field is
\begin{equation*}
    A(t)=m_t(\{t\})\in\mathscr{B}(E).
\end{equation*}
This field is constructed and shown to be Borel in \Cref{sec:atom-field}, specifically in \Cref{lem:atom-borel}.

The localisation step is carried out using the local defect $\rho_O(t)$ from \Cref{def:local-defect}. This quantity measures the part of $(Tf)(t)$ not accounted for by the diagonal term $A(t)f(t)$ when $f$ is supported in $O$. By \Cref{lem:local-defect} and \Cref{prop:localization}, for every $\eta>0$ one can find an open set $O$ such that this off-diagonal contribution is less than $\eta$ on an uncountable Borel set. After passing to a Cantor subset and applying Lusin's theorem, $T$ is therefore close to the operator-valued multiplier
\begin{equation*}
    f(t)\longmapsto A(t)f(t)
\end{equation*}
on $C(D,E)$.

At this point the scalar alternatives $g$ and $1-g$ are replaced by
\begin{equation*}
    A(t)\qquad\text{and}\qquad I_E-A(t).
\end{equation*}
In the scalar case, the fact that one of $g(t)$ and $1-g(t)$ is bounded away from zero is used only to obtain the relevant factorisation alternative. In the Banach-space setting, this alternative is supplied directly by the UPFP of $E$: at each point $t$, the identity on $E$ factors uniformly through either $A(t)$ or $I_E-A(t)$.

The remaining problem is to make these pointwise choices coherently in $t$. This is the role of the descriptive set-theoretic selection argument in \Cref{lem:continuous-selection}, based on the tools collected in \Cref{lem:dst-input}. It produces a Cantor set $D$, a fixed side $\sigma\in\{0,1\}$, and strongly continuous fields $L$ and $R$ such that
\begin{equation*}
    L(t)A_\sigma(t)R(t)=I_E \qquad(t\in D).
\end{equation*}
By \Cref{lem:multipliers}, these fields define multiplication operators on $C(D,E)$.

Finally, in \Cref{sec:transfer}, the supported Dugundji extension from \Cref{lem:dugundji} is used to compress $T$ or $I-T$ to $C(D,E)$. The compression is a small perturbation of the corresponding operator-valued multiplier, and the perturbation is inverted by a Neumann-series argument. This gives the desired factorisation through one of $T$ and $I-T$.

\bigskip
\section{Notation and preliminary results}\label{sec:preliminaries}

We use standard notation and conventions, unless explicitly stated other\-wise. All Banach spaces are over the field $\mathbb{K}\in\{\R,\C\}$. By an \emph{operator}, we always mean a bounded linear map. For Banach spaces $X$ and $Y$, write $\mathscr{B}(X,Y)$ for the space of operators from $X$ to $Y$; when $X = Y$ we simply write this as $\mathscr{B}(X)$. The identity operator on $X$ is denoted by $I_X$. We denote by $\Delta$ the Cantor set.

We start by recalling the notion of the (uniform) primary factorisation property.

\begin{definition}\label{def:factor-through}
Let $S\colon Z\to W$ and $T\colon X\to Y$ be operators. We say that $S$ \emph{factors through} $T$ with constant $C$ if there are operators $V\colon Z\to X$ and $U\colon Y\to W$ such that
\begin{equation*}
    UTV=S \qquad\text{and}\qquad \norm{U}\norm{V}\leq C.
\end{equation*}
\end{definition}

\begin{definition}\label{def:upfp}
Let $X$ be a Banach space.
\begin{enumerate}[label=(\alph*)]
    \item $X$ has the \emph{primary factorisation property} (PFP) if, for every $T\in\mathscr{B}(X)$, the identity $I_X$ factors through either $T$ or $I_X-T$.
    \item For $C\geq 1$, $X$ has the \emph{$C$-primary factorisation property} if the factorisation in (a) can always be chosen with constant at most $C$.
    \item $X$ has the \emph{uniform primary factorisation property} (UPFP) if it has the $C$-primary factorisation property for some $C\geq 1$.
\end{enumerate}
\end{definition}

Throughout the paper, we shall use the fact that the UPFP is invariant under isomorphisms; see, for instance, \cite[Proposition~2.5]{AcuavivaKania2026}. In particular, by Millutin's theorem \cite{Milutin1966} and standard results of the injective tensor products, \cite[Section 3.2]{Ryan2002}, we have
\begin{equation*}
    C(K,E)= C(K)\widehat{\otimes}_{\varepsilon} E \simeq C(\Delta) \widehat{\otimes}_{\varepsilon} E = C(\Delta, E),
\end{equation*}
and thus it is enough to prove \Cref{thm:transfer} in the case $K = \Delta$. 

We shall need the following formulation of the Dugundji extension theorem.

\begin{lemma}[Supported Dugundji extension]\label{lem:dugundji}
Let $K$ be compact metrizable, let $D\subseteq K$ be closed, and let $E$ be a Banach space. There is a linear extension operator
\begin{equation*}
    \mathcal{E}_D\colon C(D,E)\longrightarrow C(K,E)
\end{equation*}
with $\norm{\mathcal{E}_D}\leq 1$ and $\restr{\mathcal{E}_Df}{D}=f$. If $O\subseteq K$ is open and $D\subseteq O$, then there is an operator
\begin{equation*}
    J_D\colon C(D,E)\longrightarrow C(K,E)
\end{equation*}
such that
\begin{equation*}
    \norm{J_D}\leq 1,\qquad \restr{J_Df}{D}=f,\qquad \restr{J_Df}{K\setminus O}=0.
\end{equation*}
\end{lemma}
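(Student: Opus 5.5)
The plan is to build the extension from Dugundji's classical construction and then use a cutoff to get $J_D$. Throughout, $K$ is compact metrizable with a compatible metric $d$.

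\emph{Step 1: the extension $\mathcal{E}_D$.} If $D=\emptyset$ take $\mathcal{E}_D=0$. Otherwise, the open set $K\setminus D$ is a metric space. Cover it by the balls $B(x,\tfrac12 d(x,D))$ for $x\in K\setminus D$ and choose a locally finite open refinement $\{U_i\}_{i\in I}$ together with a subordinate partition of unity $\{\varphi_i\}$. Paracompactness of metric spaces (Stone's theorem) provides both. For each $i$, pick $a_i\in D$ with
\begin{equation*}
    d(a_i,U_i)\leq 2\inf_{u\in U_i} d(u,D).
\end{equation*}
Define
\begin{equation*}
    (\mathcal{E}_Df)(x)=\begin{cases} f(x), & x\in D,\\ \sum_i \varphi_i(x) f(a_i), & x\in K\setminus D.\end{cases}
\end{equation*}
This map is clearly linear. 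Each value is either a value of $f$ or a convex combination of values of $f$, so $\norm{\mathcal{E}_Df}_\infty\leq\norm{f}_\infty$. Continuity on $K\setminus D$ follows from local finiteness.

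\emph{Step 2: continuity at points of $D$.} This is the step that needs the geometric estimate, and it is the main point to check. For $x\notin D$ and any $i$ with $\varphi_i(x)\neq 0$, Dugundji's estimate gives $d(a_i,x)\leq C\, d(x,D)$ for an absolute constant $C$ (for instance $C=6$). Fix $y\in D$. If $x\to y$, then every active $a_i$ satisfies $d(a_i,y)\leq (C+1)d(x,y)\to 0$. Uniform continuity of $f$ on the compact set $D$ then yields $\norm{(\mathcal{E}_Df)(x)-f(y)}\leq \sup_i\norm{f(a_i)-f(y)}\to 0$. Only convexity of the norm is used here, so the argument works verbatim for a Banach-valued $f$.

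\emph{Step 3: the supported operator $J_D$.} Assume $D\subseteq O$ with $O$ open. Since $D$ and $K\setminus O$ are disjoint closed sets, Urysohn's lemma gives a continuous $\chi\colon K\to[0,1]$ with $\chi|_D=1$ and $\chi|_{K\setminus O}=0$. Set $J_Df=\chi\cdot\mathcal{E}_Df$. This operator is linear with $\norm{J_D}\leq 1$, restricts to $f$ on $D$, and vanishes off $O$.
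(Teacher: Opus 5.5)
Your proposal is correct and follows essentially the same route as the paper. The paper cites Dugundji's theorem for $\mathcal{E}_D$ and multiplies by a Urysohn cutoff to obtain $J_D$; you additionally reprove Dugundji's construction, and the convex-combination form does give $\norm{\mathcal{E}_D}\leq 1$ for $E$-valued functions, while your Step~3 is identical to the paper's.
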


\begin{proof}
Use Dugundji's linear extension theorem \cite{Dugundji1951} to obtain $\mathcal{E}_D$. For the supported version, choose $h\in C(K,[0,1])$ with $h=1$ on $D$ and $h=0$ on $K\setminus O$, and set $J_Df=h\mathcal{E}_Df$.
\end{proof}

\subsection{Bounded strong operator balls}

For $M\geq 0$, set
\begin{equation*}
    \mathscr{B}_M(E)=\{R\in\mathscr{B}(E):\norm{R}\leq M\},
\end{equation*}
equipped with the strong operator topology. We will need the following presumably standard fact.

\begin{lemma}[Bounded strong operator balls]\label{lem:strong-balls}
Let $E$ be separable. Then $\mathscr{B}_M(E)$ is Polish in the strong operator topology. On norm-bounded operator sets, the evaluation map
\begin{equation*}
    (R,x)\longmapsto Rx
\end{equation*}
is jointly continuous, and multiplication
\begin{equation*}
    (R,Q)\longmapsto RQ
\end{equation*}
is jointly continuous whenever all operators range over fixed norm-bounded sets.
\end{lemma}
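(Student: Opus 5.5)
We prove this by embedding $\mathscr{B}_M(E)$ as a closed subset of a countable product of copies of $E$, and then reading off the continuity statements from the uniform bound. Fix a dense sequence $(x_n)_{n\in\N}$ in $E$; we may take it to be a $\mathbb{Q}$-linear (or $\mathbb{Q}+i\mathbb{Q}$-linear) subspace, which is countable and dense. Consider the map
\begin{equation*}
    \Phi\colon \mathscr{B}_M(E)\longrightarrow E^{\N},\qquad \Phi(R)=(Rx_n)_{n\in\N},
\end{equation*}
where $E^{\N}$ carries the product of the norm topologies. This product is Polish because $E$ is separable.

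The first step is to show that $\Phi$ is a homeomorphism onto its image. Injectivity is clear by density and continuity of $R$. Continuity of $\Phi$ is immediate, since the strong operator topology is generated by the maps $R\mapsto Rx$. For the converse, suppose $R_\alpha x_n\to Rx_n$ for every $n$ (working with nets, or with sequences once metrizability is known). Given $x\in E$ and $\varepsilon>0$, pick $x_n$ with $\norm{x-x_n}<\varepsilon$. Then
\begin{equation*}
    \norm{R_\alpha x-Rx}\leq 2M\varepsilon+\norm{R_\alpha x_n-Rx_n}.
\end{equation*}
This $\varepsilon/3$ argument uses the uniform bound $M$, and it shows that on $\mathscr{B}_M(E)$ the strong operator topology is generated by the countably many seminorms $R\mapsto\norm{Rx_n}$. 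Consequently the topology is metrizable.

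The second step, which carries the main content, is to show that $\Phi(\mathscr{B}_M(E))$ is closed in $E^{\N}$, so that it is Polish. Take a sequence $(y_n)$ in the closure of the image. The coordinatewise limits preserve the two defining relations:
\begin{equation*}
    y_{k}=q y_i+r y_j \ \text{ whenever } x_k=qx_i+rx_j \text{ with } q,r \text{ rational scalars},\qquad \norm{y_n}\leq M\norm{x_n}.
\end{equation*}
The first relation makes $x_n\mapsto y_n$ additive and homogeneous over the rational scalars, and the second makes it $M$-Lipschitz. It therefore extends uniquely to a map $R$ on $E$ that is linear over $\mathbb{K}$ (by continuity) and satisfies $\norm{R}\leq M$. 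Since $\Phi(R)=(y_n)$, the image is closed.

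The joint continuity claims follow from the same uniform-bound estimates. For evaluation, with $\norm{R_\alpha}\leq M$,
\begin{equation*}
    \norm{R_\alpha x_\alpha-Rx}\leq M\norm{x_\alpha-x}+\norm{R_\alpha x-Rx}.
\end{equation*}
For multiplication, with $\norm{R_\alpha}\leq M$, we have for each $x$
\begin{equation*}
    \norm{R_\alpha Q_\alpha x-RQx}\leq M\norm{Q_\alpha x-Qx}+\norm{R_\alpha(Qx)-R(Qx)}.
\end{equation*}
Both terms tend to $0$. In the multiplication estimate only a bound on the left factors $R_\alpha$ is actually needed.
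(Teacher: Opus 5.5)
Your proposal is correct and is essentially the paper's argument: the paper also uses a dense sequence to metrize the strong operator topology on $\mathscr{B}_M(E)$ and embeds it as a closed subset of $E^{\N}$, and it proves joint continuity of evaluation and multiplication with exactly your two estimates. The only difference is that you get complete metrizability directly from the closed embedding, checking closedness via the rational-linear relations, whereas the paper asserts completeness of its explicit metric from the uniform bound $M$ and uses the embedding only for separability.
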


\begin{proof}
Choose a norm-dense sequence $(x_j)_{j \in \N}$ in the unit ball of $E$ and use the standard metric
\begin{equation*}
    d(R,Q)=\sum_{j=1}^{\infty}2^{-j} \frac{\norm{(R-Q)x_j}}{1+\norm{(R-Q)x_j}}.
\end{equation*}
Completeness follows from the uniform bound $M$, and separability follows by embedding $\mathscr{B}_M(E)$ into a closed subset of $E^\N$. Joint evaluation and multiplication are obtained by the estimates
\begin{equation*}
    \norm{R_nx_n-Rx} \leq M\norm{x_n-x}+\norm{(R_n-R)x}
\end{equation*}
and
\begin{equation*}
    \norm{R_nQ_nx-RQx} \leq \norm{R_n}\norm{(Q_n-Q)x}+\norm{(R_n-R)Qx}.
\end{equation*}
\end{proof}

We shall need the following vector-valued analogue of scalar multiplication operators.  In the scalar case, a bounded continuous function $g\in C(D)$ defines the operator
\begin{equation*}
    f\longmapsto gf,\qquad (gf)(t)=g(t)f(t).
\end{equation*}
Here, the scalar multiplier $g(t)$ is replaced by an operator $R(t)\in\mathscr{B}(E)$, depending strongly continuously on $t$.

\begin{lemma}[Strongly continuous multipliers]\label{lem:multipliers}
Let $D$ be compact and let $R\colon D\to\mathscr{B}(E)$ be strongly continuous and uniformly bounded. Then
\begin{equation*}
    (M_Rf)(t)=R(t)f(t) \qquad(t\in D)
\end{equation*}
defines an operator $M_R\in\mathscr{B}(C(D,E))$, and
\begin{equation*}
    \norm{M_R}=\sup_{t\in D}\norm{R(t)}.
\end{equation*}
\end{lemma}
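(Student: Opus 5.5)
The plan is to check three things in turn: that $M_Rf$ is continuous, that $M_R$ is linear and bounded by $\sup_t\norm{R(t)}$, and that this bound is attained in norm.

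Set $M=\sup_{t\in D}\norm{R(t)}<\infty$, which is finite by the uniform boundedness hypothesis.

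\emph{Continuity of $M_Rf$.} Fix $f\in C(D,E)$ and suppose $t_n\to t$ in $D$. I will split the difference as
\begin{equation*}
    \norm{R(t_n)f(t_n)-R(t)f(t)}\leq \norm{R(t_n)}\norm{f(t_n)-f(t)}+\norm{(R(t_n)-R(t))f(t)}.
\end{equation*}
The first term is at most $M\norm{f(t_n)-f(t)}$, which tends to $0$ by continuity of $f$. The second term tends to $0$ by strong continuity of $R$ at $t$. This is exactly the joint-evaluation estimate of \Cref{lem:strong-balls}, so $M_Rf\in C(D,E)$. If $D$ is not metrizable, the same estimate works verbatim with nets in place of sequences.

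\emph{Linearity and the upper bound.} Linearity of $M_R$ is immediate from the linearity of each $R(t)$. For every $t\in D$ we have $\norm{R(t)f(t)}\leq M\norm{f}_\infty$, so $\norm{M_R}\leq M$.

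\emph{The lower bound.} Fix $\varepsilon>0$, choose $t_0\in D$ with $\norm{R(t_0)}>M-\varepsilon$, and then choose $x\in E$ with $\norm{x}=1$ and $\norm{R(t_0)x}>M-\varepsilon$. Let $f\equiv x$ be the constant function. Then $\norm{f}_\infty=1$, and
\begin{equation*}
    \norm{M_Rf}_\infty\geq\norm{R(t_0)x}>M-\varepsilon.
\end{equation*}
Letting $\varepsilon\to0$ gives $\norm{M_R}\geq M$, and hence equality. If $D$ is empty, both sides are $0$ by convention.

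The only step needing any care is the continuity of $M_Rf$. Strong continuity by itself does not give norm continuity of $t\mapsto R(t)$, so the uniform bound $M$ is what lets us control the term where $f(t_n)$ varies.
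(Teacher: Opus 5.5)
Your proof is correct and follows essentially the same route as the paper: you use the same two-term splitting estimate, with the uniform bound controlling $\norm{R(t_n)}\norm{f(t_n)-f(t)}$ and strong continuity handling $\norm{(R(t_n)-R(t))f(t)}$, then the immediate upper bound, then testing on constant functions for the reverse inequality. The trivial case $E=\{0\}$ is not covered by your choice of a unit vector $x$, but there both sides are $0$, and the paper leaves this case implicit as well.
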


\begin{proof}
Use norm-continuity of $f$ and strong continuity of $R$ at each $t_0\in D$:
\begin{equation*}
    \norm{R(t)f(t)-R(t_0)f(t_0)} \leq \norm{R(t)}\norm{f(t)-f(t_0)} +\norm{(R(t)-R(t_0))f(t_0)}.
\end{equation*}
The upper norm estimate is immediate, and the reverse estimate follows by testing on constant functions.
\end{proof}

\subsection{Representing measures}

We shall use the representing measure theorem for operators on vector-valued continuous function spaces. Let $L$ be a compact Hausdorff space, and let $\mathcal{B}(L)$ denote its Borel $\sigma$-algebra. For an operator
\begin{equation*}
    R\colon C(L,E)\longrightarrow F
\end{equation*}
by \cite[Theorem~2.2]{BrooksLewis1974} there is a representing measure
\begin{equation*}
    m\colon \mathcal{B}(L)\longrightarrow \mathscr{B}(E,F^{**})
\end{equation*}
of bounded semivariation such that
\begin{equation*}
    \kappa_FRf=\int_L f(s)\,dm(s) \qquad(f\in C(L,E)),
\end{equation*}
where $\kappa_F\colon F\to F^{**}$ is the canonical embedding. 

We shall also need the following, which is just \cite[Lemma~3.1]{BrooksLewis1974} adapted to our current notation.

\begin{lemma}[Strong boundedness and semivariation]\label{lem:strong-bounded-semivar}
Let $m\colon \mathcal{B}(L)\to\mathscr{B}(E,F)$ be a representing measure, and write $\widetilde m(B)$ for the semivariation of $m$ on $B\in\mathcal{B}(L)$. Then $m$ is strongly bounded if and only if
\begin{equation*}
    B_n\downarrow\varnothing
    \quad\Longrightarrow\quad
    \widetilde m(B_n)\longrightarrow 0.
\end{equation*}
\end{lemma}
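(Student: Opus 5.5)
The plan is to prove both implications directly from the definitions, reducing to the case where the representing measure is weakly countably additive. Recall that $m$ is \emph{strongly bounded} if $\norm{m(B_n)}\to 0$ for every disjoint sequence $(B_n)$ in $\mathcal{B}(L)$. The semivariation is
\begin{equation*}
    \widetilde m(B)=\sup\Bigl\{\Bigl\|\sum_i m(C_i)x_i\Bigr\| : (C_i) \text{ a finite Borel partition of } B,\ \norm{x_i}\leq 1\Bigr\}.
\end{equation*}
Equivalently, $\widetilde m(B)=\sup_{\norm{y^*}\leq 1}|y^*m|(B)$, where $y^*m$ is the $E^*$-valued measure $C\mapsto y^*\circ m(C)$ and $|y^*m|$ is its variation. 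Since $\norm{m(B)}\leq \widetilde m(B)\leq\widetilde m(L)<\infty$, both sides are controlled by semivariation, and the whole point is to pass between control of single values $\norm{m(B_n)}$ and control of semivariation over subsets.

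For the easy direction ($\Leftarrow$), take a disjoint sequence $(B_n)$. Set $C_n=\bigcup_{k\geq n}B_k$, so $C_n\downarrow\varnothing$. Then
\begin{equation*}
    \norm{m(B_n)}\leq\widetilde m(B_n)\leq\widetilde m(C_n)\to 0,
\end{equation*}
using monotonicity of semivariation. This gives strong boundedness.

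For ($\Rightarrow$), suppose $B_n\downarrow\varnothing$ but $\widetilde m(B_n)\geq 2\varepsilon>0$ for all $n$. I will build a disjoint sequence $(D_k)$ with $\norm{m(D_k)}\geq c\,\varepsilon$, contradicting strong boundedness.

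The key estimate is: given $B$ with $\widetilde m(B)>2\varepsilon$, there are a Borel $D\subseteq B$ and an $x$ with $\norm{x}\leq1$ and $\norm{m(D)x}\geq c\,\varepsilon$. I would obtain it from the supremum formula. Pick $y^*$ with $|y^*m|(B)>2\varepsilon$. Then pick a finite partition $(C_i)$ of $B$ and unit vectors $x_i$ with $\sum_i |y^*m(C_i)x_i|>2\varepsilon$. A standard Rudin-type splitting of the scalars $y^*m(C_i)x_i$ by sign or argument sector (constant $1/\pi$ or $1/4$) yields a subfamily $I$ with $\bigl|\sum_{i\in I}y^*m(C_i)x_i\bigr|\gtrsim\varepsilon$.

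This does not immediately give a single vector $x$, and collapsing to a single set $D$ with a single vector is the main obstacle. I would instead run the contradiction with strong boundedness in a form allowing finitely many pieces. If $m$ is strongly bounded, then for every disjoint sequence $(B_n)$ one also has $\widetilde m(B_n)\to0$. To see this, note that strong boundedness implies each scalar measure $y^*m$ is strongly additive, uniformly in $\norm{y^*}\leq1$. Apply the Rosenthal disjointification lemma to the family $\{|y^*m|\}$ to extract a subsequence and disjoint pieces on which a single $y^*$ carries mass $\geq\varepsilon$. Then Bartle--Dunford--Schwartz/Kalton-type uniform countable additivity converts this into a uniformly strongly additive family, whose variations satisfy $\sup_{y^*}|y^*m|(B_n)\to0$ on disjoint sequences.

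Having this, the decreasing case follows. First pass to $m$ being countably additive in the weak sense, which holds by the regularity of representing measures in \cite{BrooksLewis1974}. Then choose $n_1<n_2<\cdots$ and disjoint "rings" $B_{n_k}\setminus B_{n_{k+1}}$, each carrying semivariation $\geq\varepsilon$; this is possible because $\widetilde m(B_{n_k}\setminus B_{n})\to\widetilde m(B_{n_k})$ as $n\to\infty$, by the uniform countable additivity of $\{|y^*m|\}$ obtained above. The rings are disjoint with semivariation bounded below, contradicting the disjoint-sequence version.

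Since the statement is attributed to \cite[Lemma~3.1]{BrooksLewis1974}, I would, in the end, mainly verify that the notational adaptation matches their hypotheses and cite it.
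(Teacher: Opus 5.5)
Your argument has a genuine gap, and it comes from the definition you start from. If ``strongly bounded'' means $\norm{m(B_n)}\to 0$ along disjoint sequences, then the implication ($\Rightarrow$) is false. Consequently the step where Rosenthal's lemma and a Bartle--Dunford--Schwartz argument are supposed to give $\sup_{\norm{y^*}\leq 1}|y^*m|(B_n)\to 0$ on disjoint sequences cannot be carried out. The reason is that norm-smallness of the $E^*$-valued measures $y^*m$ on subsets does not control their variations when $E$ is infinite-dimensional; the scalar bound $|\mu|(B)\leq 4\sup_{C\subseteq B}|\mu(C)|$ has no vector analogue.

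Here is a concrete counterexample. Let $L$ be the one-point compactification of the discrete set $\{s_{n,i}:n\in\N,\ 1\leq i\leq n\}$. Let $E=\ell_2$ with orthonormal basis $(e_{n,i})$, let $F=c_0$ with unit vectors $(f_n)$, and set $Tf=\sum_n n^{-1}\sum_{i=1}^n\langle f(s_{n,i}),e_{n,i}\rangle f_n$.
\begin{enumerate}[label=(\roman*)]
\item $T$ is a contraction into $c_0$; it lands in $c_0$ because $f(s_{n,i})\to f(\infty)$ uniformly in $i$ and $\langle f(\infty),e_{n,i}\rangle\to 0$.
\item Its representing measure is $m(B)x=\sum_n n^{-1}\sum_{i:\,s_{n,i}\in B}\langle x,e_{n,i}\rangle f_n$, so $\norm{m(B)}=\sup_n n^{-1}|\{i:s_{n,i}\in B\}|^{1/2}$. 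This tends to $0$ along every disjoint sequence of Borel sets.
\item Nevertheless, the disjoint blocks $\{s_{n,1},\dots,s_{n,n}\}$ have semivariation at least $\norm{\sum_i m(\{s_{n,i}\})e_{n,i}}=\norm{f_n}=1$.
\end{enumerate}
The same example shows that your ``key estimate'' (a single $D\subseteq B$ and a single $x$ with $\norm{m(D)x}\geq c\,\varepsilon$) fails. The obstacle you flagged is a real failure, not a technicality.

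The lemma is true, and this is the sense in which the paper cites \cite[Lemma~3.1]{BrooksLewis1974}, when strong boundedness of an operator-valued measure means strong boundedness of its semivariation: $\widetilde m(B_n)\to 0$ for every disjoint sequence $(B_n)$. This is also all the paper uses. Strong boundedness merely relays continuity of $\widetilde m$ at $\varnothing$ from \Cref{lem:no-c0-rows} to \Cref{lem:atom-borel} and \Cref{prop:localization}.

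With this definition, your easy direction goes through verbatim with $\widetilde m(B_n)$ in place of $\norm{m(B_n)}$. The direction ($\Rightarrow$) is exactly your final ring argument, but it needs only countable additivity of each single variation $|y^*m|$, not any uniform countable additivity. Here $|y^*m|$ is the variation of the regular countably additive $E^*$-valued measure representing $y^*T$. The argument runs as follows. Suppose $B_n\downarrow\varnothing$ and $\widetilde m(B_n)>\varepsilon$ for all $n$. Choose $y_k^*$ with $|y_k^*m|(B_{n_k})>\varepsilon$, then $n_{k+1}>n_k$ with $|y_k^*m|(B_{n_k}\setminus B_{n_{k+1}})>\varepsilon$. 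The disjoint rings $B_{n_k}\setminus B_{n_{k+1}}$ then contradict strong boundedness of $\widetilde m$. So the Rudin splitting, Rosenthal's lemma and the Bartle--Dunford--Schwartz step should be dropped.
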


We shall use the representing measure theorem through the following unconditionally converging result.

\begin{lemma}[Representing measures for no-$c_0$ ranges]\label{lem:no-c0-rows}
Let $E$ contain no isomorphic copy of $c_0$, let $L$ be compact Hausdorff, and let
\begin{equation*}
    R\colon C(L,E)\longrightarrow E
\end{equation*}
be bounded. Then $R$ is unconditionally converging. Consequently, its representing measure takes values in $\mathscr{B}(E)$ and is strongly bounded.
\end{lemma}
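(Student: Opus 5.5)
The plan is to prove the first assertion by contradiction, via the Bessaga--Pe{\l}czy{\'n}ski characterisation: a Banach space contains no copy of $c_0$ if and only if every weakly unconditionally Cauchy series in it is unconditionally convergent. The second assertion will then follow from the Brooks--Lewis theory.

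\textbf{Step 1: $R$ is unconditionally converging.} Let $\sum_n f_n$ be a weakly unconditionally Cauchy series in $C(L,E)$. Since $R$ is bounded, it maps weakly unconditionally Cauchy series to weakly unconditionally Cauchy series: for $y^*\in E^*$ we have $\sum_n |y^*(Rf_n)|=\sum_n|(R^*y^*)(f_n)|<\infty$. Hence $\sum_n Rf_n$ is weakly unconditionally Cauchy in $E$. Because $E$ contains no copy of $c_0$, the Bessaga--Pe{\l}czy{\'n}ski theorem shows that $\sum_n Rf_n$ converges unconditionally in norm. This is exactly the statement that $R$ is unconditionally converging, and the step requires nothing about $L$.

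\textbf{Step 2: consequences for the representing measure.} Here I would invoke the Brooks--Lewis theorem \cite{BrooksLewis1974}. For an operator $R\colon C(L,E)\to F$ with representing measure $m$, the following are equivalent: $R$ is unconditionally converging; $m$ is strongly bounded; and $m$ takes values in $\mathscr{B}(E,F)$, that is, $m(B)x\in\kappa_F(F)$ for every Borel set $B$ and every $x\in E$, with $m$ strongly bounded. Applying this with $F=E$ gives the claim.

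If I wanted a self-contained argument for the value claim, I would argue as follows. For each $x\in E$, the map $B\mapsto m(B)x$ is a weak$^*$-countably additive vector measure into $E^{**}$. Strong boundedness means that $m(B_n)x_n\to 0$ for disjoint $B_n$ and $\norm{x_n}\le 1$. With it, one approximates the indicator $\chi_B x$ by continuous functions $f_k$, using regularity of the associated scalar measures $\langle m(\cdot)x,y^*\rangle$. One then shows that $\kappa_E R f_k$ is norm-Cauchy, and so converges to $m(B)x$ inside the closed subspace $\kappa_E(E)$.

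\textbf{Main obstacle.} The delicate point is the implication from unconditionally converging to strongly bounded. Suppose it fails: then there are disjoint Borel sets $B_n$ and vectors $x_n$ in the unit ball with $\norm{m(B_n)x_n}\ge\varepsilon$. By regularity, I would choose disjoint open sets $U_n\supseteq B_n$ and continuous bump functions $f_n=\varphi_n x_n$, supported in $U_n$, with $\norm{Rf_n}\ge\varepsilon/2$. Because the supports are disjoint and $\norm{\varphi_n}\le 1$, the series $\sum_n f_n$ is weakly unconditionally Cauchy, indeed $\norm{\sum_{n\in\sigma}\pm f_n}\le 1$ for every finite $\sigma$. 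But $Rf_n\not\to 0$, contradicting Step 1. Ensuring the sets $U_n$ can be taken disjoint while keeping the lower bound $\norm{Rf_n}\ge\varepsilon/2$ requires a diagonal or subsequence argument using the regularity of $m$. This is precisely the content of \cite[Theorem~2.2 and Lemma~3.1]{BrooksLewis1974}, and the semivariation characterisation in \Cref{lem:strong-bounded-semivar} provides the bookkeeping. In the paper I would cite this step rather than reprove it.
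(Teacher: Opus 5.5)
Your Step~1 is exactly the paper's argument. Your Step~2, like the paper's, is a citation, so in substance your proof is the paper's. The cited statement, however, needs correcting.

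\textbf{The cited equivalence.} You state it for operators $C(L,E)\to F$ and attribute it to Brooks--Lewis as a general theorem. For general $E$ the three-way equivalence is false. Take $L$ a single point and $R=I_{c_0}$ on $C(L,c_0)=c_0$. Then $m(L)=I_{c_0}$, so $m$ is trivially strongly bounded and $\mathscr{B}(c_0)$-valued, yet $R$ is not unconditionally converging.

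Only the forward implication holds in general, and it is all you use. It is not contained in the two Brooks--Lewis results you name. Theorem~2.2 is just the representation theorem, and Lemma~3.1 (\Cref{lem:strong-bounded-semivar}) only translates continuity of $\widetilde m$ at $\varnothing$ into strong boundedness. The missing input is Dobrakov's theorem, which the paper takes from Saab (1984, Theorem~2.1). For an unconditionally converging operator, it gives that $\widetilde m$ is continuous at $\varnothing$ and that $m(B)\in\mathscr{B}(E,F)$ for every Borel $B$. \Cref{lem:strong-bounded-semivar} then yields strong boundedness.

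\textbf{Your optional sketch.} Strong boundedness is a condition on the semivariation, namely $\widetilde m(B_n)\to 0$ for disjoint $B_n$. Your gloss $m(B_n)x_n\to 0$, that is $\norm{m(B_n)}\to 0$, is strictly weaker. The reason is that $\widetilde m(B)$ is a supremum of $\norm{\sum_i m(C_i)x_i}$, with possibly different unit vectors on the pieces $C_i$ of a partition of $B$.

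So the contradiction must start from partitions $\{C_{n,i}\}_i$ of $B_n$ and unit vectors $x_{n,i}$ with $\norm{\sum_i m(C_{n,i})x_{n,i}}\geq\varepsilon$. The bumps must then be built from these simple functions, after first shrinking the $C_{n,i}$ to compact subsets. Open sets $U_n\supseteq B_n$ cannot in general be chosen pairwise disjoint: if the $B_n$ are dense, any two such neighbourhoods meet, and no subsequence argument avoids this.
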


\begin{proof}
Let $(f_n)_{n\in\N}$ be a sequence in $C(L,E)$ such that
\begin{equation*}
    \sum_{n=1}^{\infty} f_n
\end{equation*}
is weakly unconditionally Cauchy. Then
\begin{equation*}
    \sum_{n=1}^{\infty} Rf_n
\end{equation*}
is weakly unconditionally Cauchy in $E$, since bounded operators preserve weakly unconditionally Cauchy series.

By a classical result of Bessaga and Pe{\l}czy{\'n}ski \cite[Theorem~5]{BessagaPelczynski1958}, a Banach space contains an isomorphic copy of $c_0$ if and only if it admits a weakly unconditionally Cauchy series which is not unconditionally convergent. Since $E$ contains no isomorphic copy of $c_0$, it follows that
\begin{equation*}
    \sum_{n=1}^{\infty} Rf_n
\end{equation*}
is unconditionally convergent. Hence $R$ is unconditionally converging.

Let
\begin{equation*}
    m\colon \mathcal{B}(L)\longrightarrow \mathscr{B}(E,E^{**})
\end{equation*}
be the representing measure of $R$. By \cite[Theorem~2.1]{Saab1984}, or equivalently by Dobrakov's theorem as recalled immediately before that theorem, the representing measure of an unconditionally converging operator on $C(L,E)$ has semivariation continuous at $0$ and, after identifying $E$ with its canonical image in $E^{**}$, satisfies
\begin{equation*}
    m(B)\in \mathscr{B}(E)
    \qquad(B\in\mathcal{B}(L)).
\end{equation*}
Thus $m$ takes its values in $\mathscr{B}(E)$ and, by \Cref{lem:strong-bounded-semivar}, is strongly bounded.
\end{proof}

Lastly, we need the following standard result.

\begin{lemma}[Semivariation estimates]\label{lem:semivar-estimates}
Let $R\colon C(L,E)\to F$ be bounded, and let $m$ be its representing measure. Then, for every $B\in\mathcal{B}(L)$,
\begin{equation*}
    \norm{m(B)}\leq \widetilde m(B)\leq \widetilde m(L)=\norm{R}.
\end{equation*}
Moreover, whenever $g\colon L\to E$ is bounded Borel, has relatively compact range, and vanishes off $B$, one has
\begin{equation}\label{eq:semivariation-integral}
    \norm{\int_B g\,dm}\leq \widetilde m(B)\norm{g}_{\infty}.
\end{equation}
\end{lemma}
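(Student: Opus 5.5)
The plan is to go back to the definition of the semivariation and to the construction of the integral, using the Brooks--Lewis framework. Recall
\begin{equation*}
    \widetilde m(B)=\sup\Big\{\Big\|\sum_{i} m(B_i)x_i\Big\| : \{B_i\}\text{ a finite Borel partition of } B,\ \norm{x_i}\leq 1\Big\},
\end{equation*}
with norms computed in $F^{**}$ (or in $F$, once $m$ is known to take values in $\mathscr{B}(E,F)$).

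The first inequality, $\norm{m(B)}\leq\widetilde m(B)$, is the case of the trivial partition $\{B\}$ with a single vector $x$ of norm at most $1$. Monotonicity, $\widetilde m(B)\leq\widetilde m(L)$, holds because any partition of $B$ becomes a partition of $L$ after adding the piece $L\setminus B$ with vector $0$. The identity $\widetilde m(L)=\norm{R}$ is part of the Brooks--Lewis representation theorem \cite[Theorem~2.2]{BrooksLewis1974}. If I want it directly, I would argue in two directions:
\begin{itemize}
    \item[$\geq$:] Since $\kappa_F$ is an isometry, $\norm{R}=\sup\norm{\int f\,dm}$ over $\norm{f}_\infty\leq1$. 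Each continuous $f$ is a uniform limit of simple Borel functions $\sum \chi_{B_i}x_i$ with $\norm{x_i}\leq 1$, and the integral of such a simple function is $\sum m(B_i)x_i$, whose norm is at most $\widetilde m(L)$.
    \item[$\leq$:] Conversely, $\widetilde m(L)\leq\norm{R}$ comes from the way $m$ is constructed as the restriction of the bidual extension $R^{**}$ to $\chi_B x$. Since $\norm{\sum\chi_{B_i}x_i}_\infty\leq1$, we get $\norm{\sum m(B_i)x_i}\leq\norm{R^{**}}=\norm{R}$.
\end{itemize}

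For the integral estimate \eqref{eq:semivariation-integral}, I would first treat simple functions. If $g=\sum_i \chi_{B_i}x_i$ with the $B_i$ disjoint, contained in $B$, and $\norm{x_i}\leq\norm{g}_\infty$, then scaling gives $\norm{\sum m(B_i)x_i}\leq\widetilde m(B)\norm{g}_\infty$ directly from the definition. The pieces can be completed to a partition of $B$ by adding $B\setminus\bigcup B_i$ with vector $0$.

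The general case is an approximation argument, and it is the only real step. Since $g$ has relatively compact range, for each $\varepsilon>0$ I would cover $\overline{g(L)}$ by finitely many balls of radius $\varepsilon$ and pull back a disjointification to obtain Borel sets. This produces a simple Borel function $g_\varepsilon$ with $\norm{g-g_\varepsilon}_\infty\leq\varepsilon$, vanishing off $B$ (assign the value $0$ off $B$), and with $\norm{g_\varepsilon}_\infty\leq\norm{g}_\infty$, which can be arranged by choosing the centers in $g(L)$. The integral of such a $g$ is defined, as in Brooks--Lewis/Dinculeanu, as the uniform limit of the integrals of simple functions. That limit exists and is controlled precisely because the simple-function bound gives
\begin{equation*}
    \norm{\int (g_\varepsilon-g_{\varepsilon'})\,dm}\leq \widetilde m(L)\,\norm{g_\varepsilon-g_{\varepsilon'}}_\infty.
\end{equation*}
Passing to the limit in $\norm{\int g_\varepsilon\,dm}\leq\widetilde m(B)\norm{g}_\infty$ yields \eqref{eq:semivariation-integral}.

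The main point needing care is this last step. One must make sure the integral appearing in the statement is the one obtained by uniform approximation by simple functions, which is why relative compactness of the range is assumed. One must also check that the approximating simple functions can be chosen to vanish off $B$ without increasing the sup norm.
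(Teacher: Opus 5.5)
Your proposal is correct and follows the paper's proof essentially step for step. Both use the one-set partition for $\norm{m(B)}\leq\widetilde m(B)$, monotonicity, the norm-preserving part of the Brooks--Lewis theorem for $\widetilde m(L)=\norm{R}$, and the simple-function bound extended by uniform approximation, which the relatively compact range of $g$ makes possible. Your optional direct check of $\widetilde m(L)=\norm{R}$ via $R^{**}$, and your explicit choice of approximants that vanish off $B$ without increasing the sup norm, are sound details the paper leaves implicit.
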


\begin{proof}
The inequality $\norm{m(B)}\leq\widetilde m(B)$ follows directly from the definition of semivariation by testing the one-set partition of $B$. The inequality $\widetilde m(B)\leq\widetilde m(L)$ is immediate from monotonicity of semivariation, and the equality $\widetilde m(L)=\norm{R}$ is the norm-preserving part of the representing measure theorem.

For the integral estimate, first suppose that $g$ is simple and vanishes off $B$, say
\begin{equation*}
    g=\sum_{i=1}^n \chi_{B_i}x_i,
\end{equation*}
where $B_i\in\mathcal{B}(L)$ are disjoint subsets of $B$. Then
\begin{equation*}
    \norm{\int_B g\,dm}=\norm{\sum_{i=1}^n m(B_i)x_i} \leq \widetilde m(B)\max_{1\leq i\leq n}\norm{x_i}\leq \widetilde m(B)\norm{g}_{\infty}.
\end{equation*}
The general case follows by uniformly approximating $g$ by bounded Borel simple functions with values in the relatively compact range of $g$ and passing to the limit in the representing integral.
\end{proof}

\subsection{Descriptive set theory} 

We will need to use some key results from descriptive set theory. From now on, we will say that a topological space $P$ is a \emph{Cantor set} if it is homeomorphic to the Cantor set $\Delta$.

\begin{lemma}\label{lem:dst-input}
Let $X$ and $Y$ be Polish spaces.
\begin{enumerate}[label=\textup{(\roman*)}, ref=\textup{(\roman*)}]
    \item\label{item:dst-cantor}
    Every uncountable analytic subset of $X$ contains a Cantor set.

    \item\label{item:dst-jvn}
    If $A\subseteq X\times Y$ is analytic and every section
    \begin{equation*}
        A_x=\{y\in Y:(x,y)\in A\}
    \end{equation*}
    is non-empty, then there is a universally measurable map $\phi\colon X\to Y$ such that
    \begin{equation*}
        (x,\phi(x))\in A \qquad(x\in X).
    \end{equation*}

    \item\label{item:dst-lusin}
    If $\mu$ is a finite Borel measure on $X$ and $\phi\colon X\to Y$ is $\mu$-measurable, then for every $\varepsilon>0$ there is a compact set $C\subseteq X$ such that
    \begin{equation*}
        \mu(X\setminus C)<\varepsilon
    \end{equation*}
    and $\phi|_C$ is continuous.
\end{enumerate}
\end{lemma}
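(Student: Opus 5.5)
This lemma collects three classical results from descriptive set theory, so the plan is to cite the standard sources rather than reprove them. Kechris's \emph{Classical Descriptive Set Theory} is the natural reference throughout.

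For \ref{item:dst-cantor}, I will invoke the perfect set theorem for analytic sets (Souslin). Write the uncountable analytic set $A\subseteq X$ as a continuous image $A=f(\N^\N)$. Define a Cantor scheme of basic open sets in $\N^\N$ whose images under $f$ have pairwise disjoint closures. This works because the set of points $x\in\N^\N$ such that every neighbourhood of $x$ has uncountable image is non-empty and has no isolated points, which follows from countability arguments.

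\begin{itemize}
\item Along each branch the diameters of the images shrink to $0$, since the scheme can be chosen so that they do.
\item Hence the map from $\{0,1\}^\N$ into $A$ along the branches is a continuous injection.
\item Its image is therefore a compact homeomorphic copy of $\Delta$ inside $A$.
\end{itemize}

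The only care needed here is to keep the closures disjoint and the images of small diameter simultaneously. This is routine because $X$ is metrizable.

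For \ref{item:dst-jvn}, I will cite the Jankov--von Neumann uniformisation theorem. Write $A=p(F)$, where $F\subseteq X\times Y\times\N^\N$ is closed, or more conveniently $A$ is the continuous image of a closed subset of $\N^\N$. Then choose, for each $x$, the lexicographically least witness in the closed set of codes projecting into $A_x$. The resulting selector is $\sigma(\Sigma^1_1)$-measurable. Since analytic sets are universally measurable (Lusin), $\sigma(\Sigma^1_1)$-measurable maps are universally measurable. Composing with the continuous projection to $Y$ gives $\phi$.

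For \ref{item:dst-lusin}, I will reduce to the classical Lusin theorem. Since $Y$ is Polish, it embeds as a $G_\delta$ subset of $[0,1]^\N$, so it suffices to make each coordinate $\phi_n$ continuous on a large compact set.

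\begin{itemize}
\item Apply the real-valued Lusin theorem to each $\phi_n$ with error $\varepsilon 2^{-n-1}$; this uses regularity of finite Borel measures on Polish spaces.
\item Intersect the resulting closed sets.
\item Use inner regularity by compact sets, valid since finite Borel measures on Polish spaces are tight, to shrink to a compact $C$.
\end{itemize}

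Any $\mu$-measurable map agrees $\mu$-a.e. with a Borel map, and this is absorbed into the exceptional set. The main obstacle, if any, is bookkeeping: tightness must be used so that one lands in a compact rather than merely closed set.
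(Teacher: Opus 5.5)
Your proposal is correct and takes the same approach as the paper, which proves the lemma purely by citation: the perfect set theorem for analytic sets, the universally measurable form of the Jankov--von Neumann uniformization theorem, and Lusin's theorem (Kechris, Theorems 29.1, 29.9 and 17.12). Your extra sketches follow the standard textbook proofs; the one point to state explicitly in (i) is that the branch limits are taken in $\N^\N$ (nested cylinders of increasing length), so their $f$-images lie in $A$ — shrinking diameters of the images in $X$ alone would not guarantee this.
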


\begin{proof}
\ref{item:dst-cantor} is the perfect set theorem for analytic sets; see \cite[Theorem~29.1, p.~226]{Kechris1995}. Since every Borel set is analytic, it also applies to uncountable Borel sets. \ref{item:dst-jvn} is the Jankov--von Neumann uniformization theorem in its universally measurable form; see \cite[Theorem~29.9, p.~227]{Kechris1995}. Finally, \ref{item:dst-lusin} is Lusin's theorem; see \cite[Theorem~17.12, p.~108]{Kechris1995}.
\end{proof}

We now prove the following result.

\begin{lemma}[Continuous factor selection on a Cantor set]\label{lem:continuous-selection}
Let $P$ be a Cantor set, $E$ be a separable Banach space, and
\begin{equation*}
    A\colon P\longrightarrow \mathscr{B}_M(E)
\end{equation*}
be strongly continuous. Fix $r>0$. Suppose that for every $t\in P$ there are $\sigma(t)\in\{0,1\}$ and operators $L_t,R_t\in \mathscr{B}_r(E)$ such that
\begin{equation*}
    L_tA_{\sigma(t)}(t)R_t=I_E, \qquad A_0(t)=A(t),\quad A_1(t)=I_E-A(t).
\end{equation*}
Then there are a Cantor set $D\subseteq P$, a fixed $\sigma\in\{0,1\}$, and strongly continuous maps
\begin{equation*}
    L,R\colon D\longrightarrow \mathscr{B}_r(E)
\end{equation*}
such that
\begin{equation*}
    L(t)A_\sigma(t)R(t)=I_E \qquad(t\in D).
\end{equation*}
\end{lemma}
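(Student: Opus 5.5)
We will use the descriptive set theory collected in \Cref{lem:dst-input}. Consider the set
\begin{equation*}
    \mathcal{S}=\bigl\{(t,\sigma,L,R)\in P\times\{0,1\}\times\mathscr{B}_r(E)\times\mathscr{B}_r(E) : L A_\sigma(t) R=I_E\bigr\}.
\end{equation*}
By \Cref{lem:strong-balls}, $\mathscr{B}_r(E)$ is Polish in the strong operator topology. The map $(t,L,R)\mapsto L A_\sigma(t) R$ is jointly continuous into $\mathscr{B}_{r^2(M+1)}(E)$, since $A$ is strongly continuous and multiplication is jointly continuous on bounded sets. Hence $\mathcal{S}$ is closed, being the preimage of the point $I_E$, and so it is analytic. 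By hypothesis every section $\mathcal{S}_t$ is non-empty. \Cref{lem:dst-input}\ref{item:dst-jvn}, applied with $X=P$ and $Y=\{0,1\}\times\mathscr{B}_r(E)^2$, gives a universally measurable selector $t\mapsto(\sigma(t),L(t),R(t))$ with $(t,\sigma(t),L(t),R(t))\in\mathcal{S}$ for every $t\in P$.

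Next we fix the side. The sets $P_0=\sigma^{-1}(0)$ and $P_1=\sigma^{-1}(1)$ are universally measurable and cover $P$. Fix a continuous probability measure $\mu$ on $P$ with full support, for instance the Cantor measure transported by a homeomorphism $P\cong\Delta$. Then some $P_\sigma$ has $\mu(P_\sigma)>0$. Since $P_\sigma$ is universally measurable, it contains a compact set $C_1\subseteq P_\sigma$ with $\mu(C_1)>0$. Restricting $\mu$ to $C_1$ and applying Lusin's theorem, \Cref{lem:dst-input}\ref{item:dst-lusin}, to the $\mu$-measurable map $t\mapsto(L(t),R(t))$ gives a compact $C\subseteq C_1$ with $\mu(C)>0$ on which $L$ and $R$ are continuous into $\mathscr{B}_r(E)$ with the strong operator topology.

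Finally, $C$ is uncountable, because $\mu$ is non-atomic and $\mu(C)>0$. Being closed, $C$ is Borel and hence analytic, so \Cref{lem:dst-input}\ref{item:dst-cantor} yields a Cantor set $D\subseteq C$. On $D$ the side $\sigma$ is constant, the maps $L$ and $R$ are strongly continuous with values in $\mathscr{B}_r(E)$, and $L(t)A_\sigma(t)R(t)=I_E$ holds because the selector lands in $\mathcal{S}$.

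The main obstacle is the measurability bookkeeping. The selector given by Jankov--von Neumann is only universally measurable and not Borel, so the argument must use a specific non-atomic measure. This is what makes both the passage to a positive-measure compact subset of $P_\sigma$ and Lusin's theorem legitimate, and what guarantees that the resulting compact set is uncountable rather than merely non-empty. A lesser point to check is that the target space $\{0,1\}\times\mathscr{B}_r(E)^2$ is Polish. It also needs checking that joint continuity of $L A_\sigma(t) R$ holds in the strong topology; this requires the uniform norm bounds on $L$, $A_\sigma(t)$ and $R$, namely $\norm{A_1(t)}\leq 1+M$.
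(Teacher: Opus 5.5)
Your proof is correct. It uses the same ingredients as the paper: Jankov--von Neumann, Lusin's theorem for a nonatomic measure, and the perfect set theorem. The difference lies only in how the side $\sigma$ is fixed.

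The paper fixes $\sigma$ \emph{before} uniformising. The projections of the closed sets $\Gamma_0,\Gamma_1\subseteq P\times\mathscr{B}_r(E)^2$ onto $P$ are analytic and cover $P$. So one of them is uncountable, and by the perfect set theorem for analytic sets it contains a Cantor set $Q$. The paper then applies Jankov--von Neumann to $\Gamma_\sigma$ over $Q$ and Lusin's theorem on $Q$ directly.

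You instead carry $\sigma$ as an extra coordinate and uniformise once. You then fix the side afterwards by measure: the sets $P_0,P_1$ are universally measurable, and inner regularity of the completed measure gives a compact subset of positive measure.

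Your route never needs the perfect set theorem for analytic, non-Borel sets. It only needs it for the compact set $C$, where Cantor--Bendixson suffices. The cost is the small extra regularity step. The paper's route has the side fixed on a Cantor set before any measure enters.

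Your explicit bound $\norm{A_1(t)}\leq 1+M$, used for the joint continuity, is a detail the paper leaves implicit.
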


\begin{proof}
For $\sigma\in\{0,1\}$, define
\begin{equation*}
    \Gamma_\sigma=\{(t,L,R)\in P\times\mathscr{B}_r(E)^2:LA_\sigma(t)R=I_E\}.
\end{equation*}
By \Cref{lem:strong-balls}, the balls $\mathscr{B}_r(E)$ are Polish for the strong operator topology, and the maps involved in the definition of $\Gamma_\sigma$ are continuous. Hence, each $\Gamma_\sigma$ is closed. The projections of $\Gamma_0$ and $\Gamma_1$ onto $P$ are analytic, and they cover $P$. Therefore one of them is uncountable, and by \Cref{lem:dst-input} \ref{item:dst-cantor} it contains a Cantor set $Q$.

Fix this $\sigma$. By \Cref{lem:dst-input} \ref{item:dst-jvn}, applied to the relation $\Gamma_\sigma\cap(Q\times\mathscr{B}_r(E)^2)$, there are universally measurable maps
\begin{equation*}
    L,R\colon Q\longrightarrow \mathscr{B}_r(E)
\end{equation*}
such that
\begin{equation*}
    L(t)A_\sigma(t)R(t)=I_E \qquad(t\in Q).
\end{equation*}
Choose a nonatomic Borel probability measure on $Q$ with full support. By \Cref{lem:dst-input} \ref{item:dst-lusin}, there is a compact set $C\subseteq Q$ of positive measure on which both $L$ and $R$ are strongly continuous. Since the measure is nonatomic, $C$ is uncountable. By \Cref{lem:dst-input} \ref{item:dst-cantor}, $C$ contains a Cantor set $D$. Restricting $L$ and $R$ to $D$ gives the desired strongly continuous factor fields. 
\end{proof}

\begin{remark}
    The preceding lemma is the point at which the descriptive set-theoretic machinery enters the proof. Its role is to turn pointwise factorisations of the operators $A(t)$ or $I_E-A(t)$ into factorisations whose left and right factors vary strongly continuously on a Cantor subset. This continuity is essential in the sequel, since it allows the factor fields to define bounded multiplication operators on $C(D,E)$.
\end{remark}

\bigskip
\section{The diagonal atom field}\label{sec:atom-field}

Let $K$ be an uncountable compact metrizable space, and let $E$ be a separable Banach space containing no isomorphic copy of $c_0$. Throughout this section, fix an operator
\begin{equation*}
    T\in\mathscr{B}(C(K,E)).
\end{equation*}
For $t\in K$, define the row operator
\begin{equation*}
    T_t=\delta_tT\colon C(K,E)\longrightarrow E, \qquad T_tf=(Tf)(t).
\end{equation*}

By \Cref{lem:no-c0-rows}, $T_t$ is unconditionally converging and its representing measure $m_t$ takes its values in $\mathscr{B}(E)$ and is strongly bounded. Define the diagonal atom
\begin{equation*}
    A(t)=m_t(\{t\})\in\mathscr{B}(E).
\end{equation*}
By \Cref{lem:semivar-estimates},
\begin{equation*}
    \norm{A(t)}\leq \widetilde m_t(\{t\})\leq \widetilde m_t(K)=\norm{T_t}\leq\norm{T}.
\end{equation*}

\begin{lemma}[Borel atom field]\label{lem:atom-borel}
The map
\begin{equation*}
    A\colon K\longrightarrow \mathscr{B}_{\norm{T}}(E)
\end{equation*}
is Borel for the strong operator topology.
\end{lemma}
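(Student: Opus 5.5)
We prove that $t\mapsto A(t)x$ is Borel from $K$ to $E$ for each fixed $x$ in a countable dense subset of $E$. Because the strong operator topology on $\mathscr{B}_{\norm{T}}(E)$ is metrised by the metric $d$ of \Cref{lem:strong-balls}, built from the dense sequence $(x_j)_{j\in\N}$, this is enough. Indeed, the map $R\mapsto (Rx_j)_{j\in\N}$ embeds $\mathscr{B}_{\norm{T}}(E)$ homeomorphically into $E^\N$, so a map into the ball is Borel as soon as each coordinate $t\mapsto A(t)x_j$ is Borel. So we fix $x\in E$ and express $A(t)x=m_t(\{t\})x$ as a limit of quantities that visibly depend on $t$ in a Borel way.

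Fix a compatible metric on $K$. For $n\in\N$, choose continuous functions $h_{n,t}\colon K\to[0,1]$ depending jointly continuously on $(s,t)$; concretely
\begin{equation*}
    h_{n,t}(s)=\max\{0,1-n\,d(s,t)\}.
\end{equation*}
Then $h_{n,t}(t)=1$ and $h_{n,t}$ is supported in the closed ball $\overline{B}(t,1/n)$. Put
\begin{equation*}
    F_n(t)=T_t(h_{n,t}x)=\bigl(T(h_{n,t}x)\bigr)(t).
\end{equation*}
The map $t\mapsto h_{n,t}x\in C(K,E)$ is norm continuous by uniform continuity of the metric. Moreover $(f,t)\mapsto (Tf)(t)$ is jointly continuous, since $\norm{(Tf)(t)-(Tg)(s)}\le\norm{T}\norm{f-g}+\norm{(Tg)(t)-(Tg)(s)}$. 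Hence each $F_n\colon K\to E$ is continuous.

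The main step is to show $F_n(t)\to A(t)x$ for every $t$, using strong boundedness of $m_t$ from \Cref{lem:no-c0-rows}. Write
\begin{equation*}
    F_n(t)=\int_K h_{n,t}x\,dm_t=m_t(\{t\})x+\int_{B_n\setminus\{t\}}h_{n,t}x\,dm_t,
\end{equation*}
where $B_n=\overline{B}(t,1/n)$. Here we use $h_{n,t}(t)=1$ and the fact that the integral of a function with relatively compact range (the range is contained in $[0,1]x$) splits over Borel sets. By \eqref{eq:semivariation-integral}, the remainder is bounded by $\widetilde m_t(B_n\setminus\{t\})\norm{x}$. The sets $B_n\setminus\{t\}$ decrease to $\varnothing$, so \Cref{lem:strong-bounded-semivar} gives $\widetilde m_t(B_n\setminus\{t\})\to0$. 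Therefore $A(t)x=\lim_n F_n(t)$ is a pointwise limit of continuous $E$-valued maps, hence Borel (even of Baire class one).

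The delicate point is this convergence step. One must check that the integral identity really splits off the atom, which requires additivity of $B\mapsto\int_B g\,dm_t$ for bounded Borel $g$ with relatively compact range, valid via simple-function approximation as in \Cref{lem:semivar-estimates}. One also needs that the semivariation used in \Cref{lem:strong-bounded-semivar} is the one controlling \eqref{eq:semivariation-integral}. Once this holds for all $x$, applying it to each $x_j$ shows that $t\mapsto A(t)$ is Borel into $E^\N$, hence Borel into $\mathscr{B}_{\norm{T}}(E)$ with the strong operator topology. The bound $\norm{A(t)}\le\norm{T}$ was already established before the statement.
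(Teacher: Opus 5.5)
Your proposal is correct and follows essentially the same route as the paper. The paper uses the same tent functions $\varphi_n(t,s)=\max\{1-nd(t,s),0\}$, shows $t\mapsto T(\varphi_n(t,\cdot)x)(t)$ is continuous, and peels off the atom $m_t(\{t\})x$ with the remainder bounded by $\widetilde m_t(\{s:0<d(s,t)\leq n^{-1}\})\norm{x}\to 0$ via strong boundedness. Your coordinatewise reduction through $E^{\N}$ is just an unpacked version of the paper's appeal to metrizability of $\mathscr{B}_{\norm{T}}(E)$.
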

\begin{proof}
Fix a compatible metric $d$ on $K$, and put
\begin{equation*}
    \varphi_n(t,s)=\max\{1-nd(t,s),0\} \qquad(t,s\in K).
\end{equation*}
For each $n\in\N$, define
\begin{equation*}
    A_n\colon K\longrightarrow \mathscr{B}(E)
\end{equation*}
by
\begin{equation*}
    A_n(t)x=T(\varphi_n(t,\cdot)x)(t) \qquad(t\in K,\ x\in E).
\end{equation*}
We first check that $A_n(t)\in\mathscr{B}(E)$ and that $t\mapsto A_n(t)$ is strongly continuous. Linearity of $A_n(t)$ in $x$ is immediate. Moreover,
\begin{equation*}
    \norm{A_n(t)x}\leq \norm{T}\norm{\varphi_n(t,\cdot)x}_{\infty}\leq \norm{T}\norm{x},
\end{equation*}
so $A_n(t)\in\mathscr{B}(E)$ and $\norm{A_n(t)}\leq\norm{T}$.

Fix $x\in E$. The map
\begin{equation*}
    t\longmapsto \varphi_n(t,\cdot)x
\end{equation*}
is norm-continuous from $K$ into $C(K,E)$. Hence the map
\begin{equation*}
    g_n\colon K\longrightarrow C(K,E)
\end{equation*}
defined by
\begin{equation*}
    g_n(t)=T(\varphi_n(t,\cdot)x) \qquad(t\in K)
\end{equation*}
is norm-continuous. Equivalently,
\begin{equation*}
    g_n(t)(s)=T(\varphi_n(t,\cdot)x)(s) \qquad(t,s\in K).
\end{equation*}
Since
\begin{equation*}
    A_n(t)x=g_n(t)(t),
\end{equation*}
and the diagonal evaluation map
\begin{equation*}
    K\times C(K,E)\longrightarrow E,\qquad (t,g)\longmapsto g(t),
\end{equation*}
is continuous, the map
\begin{equation*}
    t\longmapsto A_n(t)x
\end{equation*}
is continuous. Since $x\in E$ was arbitrary, $A_n$ is strongly continuous.

For fixed $t\in K$ and $x\in E$, the representing measure formula gives
\begin{equation*}
    A_n(t)x=\int_K \varphi_n(t,s)x\,dm_t(s).
\end{equation*}
Since $\varphi_n(t,t)=1$, the contribution of the atom $\{t\}$ is
\begin{equation*}
    \int_{\{t\}}\varphi_n(t,s)x\,dm_t(s)=m_t(\{t\})x=A(t)x.
\end{equation*}
Thus
\begin{equation*}
    A_n(t)x-A(t)x=\int_{K\setminus\{t\}}\varphi_n(t,s)x\,dm_t(s).
\end{equation*}
Since $\varphi_n(t,s)=0$ whenever $d(s,t)\geq n^{-1}$, \Cref{lem:semivar-estimates} gives
\begin{equation*}
    \norm{A_n(t)x-A(t)x}\leq \widetilde m_t(\{s:0<d(s,t)\leq n^{-1}\})\norm{x}.
\end{equation*}
The sets
\begin{equation*}
    \{s:0<d(s,t)\leq n^{-1}\}
\end{equation*}
decrease to the empty set. Since $m_t$ is strongly bounded, \Cref{lem:strong-bounded-semivar} gives
\begin{equation*}
    A_n(t)x\longrightarrow A(t)x.
\end{equation*}
Thus $A$ is the pointwise strong limit of the Borel maps $A_n$. Since $\mathscr{B}_{\norm{T}}(E)$ is metrizable for the strong operator topology by \Cref{lem:strong-balls}, the map $A$ is Borel.
\end{proof}

The next definition measures the local off-diagonal part of $T$. For the row operator $\delta_tT$, the atom $A(t)=m_t(\{t\})$ records the contribution coming from the value of a function at $t$. Thus $A(t)f(t)$ is the diagonal part of $(Tf)(t)$. If $f$ is supported in an open set $O$ containing $t$, then the difference
\begin{equation*}
    (Tf)(t)-A(t)f(t)
\end{equation*}
comes only from the part of the representing measure on $O\setminus\{t\}$.

\begin{definition}[Local defect]\label{def:local-defect}
For an open set $O\subseteq K$ and $t\in O$, define
\begin{equation*}
    \rho_O(t)=\sup\{\norm{(Tf)(t)-A(t)f(t)}:\norm{f}\leq 1,\ \restr{f}{K\setminus O}=0\}.
\end{equation*}
\end{definition}

We shall use the following elementary measurability fact.

\begin{lemma}[Borel evaluation]\label{lem:borel-evaluation}
Let $X$ be a measurable space, let $E$ be a separable Banach space, and let
\begin{equation*}
    S\colon X\longrightarrow \mathscr{B}(E)
\end{equation*}
be strongly Borel and uniformly bounded. If $u\colon X\to E$ is Borel, then
\begin{equation*}
    x\longmapsto S(x)u(x)
\end{equation*}
is Borel.
\end{lemma}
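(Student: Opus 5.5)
The plan is to reduce the Borel measurability of $x\mapsto S(x)u(x)$ to that of the maps $x\mapsto S(x)e$ for fixed $e\in E$, and to handle general $u$ by approximation with countably-valued Borel functions. Since $E$ is separable, its Borel $\sigma$-algebra is generated by the open balls. Moreover, a pointwise limit of Borel maps into a metric space is Borel. So it suffices to write $S(\cdot)u(\cdot)$ as a pointwise (in fact uniform) limit of Borel maps.

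\emph{Step 1: fixed vectors.} For fixed $e\in E$, the map $x\mapsto S(x)e$ is Borel. This holds because strong Borel measurability of $S$ means exactly that $x\mapsto S(x)\in\mathscr{B}_M(E)$ is Borel for the strong operator topology, where $M=\sup_x\norm{S(x)}$. The evaluation $R\mapsto Re$ is continuous on $\mathscr{B}_M(E)$ by \Cref{lem:strong-balls}, and a continuous map composed with a Borel map is Borel.

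\emph{Step 2: countably-valued approximation of $u$.} Fix a dense sequence $(e_j)_{j\in\N}$ in $E$. For each $n\in\N$, define $u_n(x)=e_j$, where $j$ is the least index with $\norm{u(x)-e_j}<1/n$. The sets
\begin{equation*}
    B_{n,j}=\{x:\norm{u(x)-e_j}<1/n,\ \norm{u(x)-e_i}\geq 1/n\ (i<j)\}
\end{equation*}
are Borel because $u$ is Borel, and for each fixed $n$ they partition $X$. On $B_{n,j}$ we have $S(x)u_n(x)=S(x)e_j$. Hence $x\mapsto S(x)u_n(x)$ is obtained by gluing countably many Borel maps along a countable Borel partition, and is therefore Borel: the preimage of an open set $V$ is $\bigcup_j \bigl(B_{n,j}\cap\{x:S(x)e_j\in V\}\bigr)$.

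\emph{Step 3: uniform limit.} We have
\begin{equation*}
    \norm{S(x)u(x)-S(x)u_n(x)}\leq M\norm{u(x)-u_n(x)}\leq M/n,
\end{equation*}
so $S(\cdot)u(\cdot)$ is the uniform limit of the Borel maps $S(\cdot)u_n(\cdot)$ and is therefore Borel.

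The only point requiring care is to interpret ``strongly Borel'' correctly. If it is meant as Borel measurability of $x\mapsto S(x)e$ for each $e$, then Step 1 holds by hypothesis. If it is meant as Borel measurability into $\mathscr{B}_M(E)$ with the strong operator topology, then Step 1 follows from \Cref{lem:strong-balls}. Either reading suffices for the rest of the argument, and the uniform bound is exactly what makes the approximation in Step 3 uniform.
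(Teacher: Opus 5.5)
Your proof is correct and follows essentially the same route as the paper: it checks the maps $x\mapsto S(x)e$ for fixed $e$, approximates $u$ by Borel functions with countable range, and uses the uniform bound on $S$ to pass to the limit. The only difference is cosmetic: the paper uses finitely-valued simple functions converging pointwise, whereas your countably-valued approximants converge uniformly and let you glue along a Borel partition instead of adding Borel maps.
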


\begin{proof}
First suppose that $u$ is simple, say
\begin{equation*}
    u=\sum_{i=1}^n \mathbf{1}_{B_i}x_i.
\end{equation*}
Then
\begin{equation*}
    S(x)u(x)=\sum_{i=1}^n \mathbf{1}_{B_i}(x)S(x)x_i,
\end{equation*}
which is Borel by strong Borelness of $S$.

For general Borel $u$, since $E$ is separable, there are simple Borel maps $u_n\colon X\to E$ such that $u_n(x)\to u(x)$ for every $x\in X$. By uniform boundedness of $S$,
\begin{equation*}
    \norm{S(x)u_n(x)-S(x)u(x)}\leq \sup_{y\in X}\norm{S(y)}\,\norm{u_n(x)-u(x)}.
\end{equation*}
Thus $S(x)u_n(x)\to S(x)u(x)$ pointwise. Since $E$ is metric, pointwise limits of Borel maps into $E$ are Borel.
\end{proof}

The following lemma makes the preceding intuition precise: $\rho_O(t)$ is controlled by the semivariation of the off-diagonal part of $m_t$ on $O$.

\begin{lemma}[Local defect estimate]\label{lem:local-defect}
For every open $O\subseteq K$, the function
\begin{equation*}
    \rho_O\colon O\longrightarrow [0,\infty)
\end{equation*}
is Borel. Moreover,
\begin{equation*}
    \rho_O(t)\leq \widetilde m_t(O\setminus\{t\}) \qquad(t\in O).
\end{equation*}
\end{lemma}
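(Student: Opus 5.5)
I will prove the estimate first and then deduce Borel measurability by rewriting the supremum as a countable one.

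\emph{The estimate.} Fix $t\in O$ and $f\in C(K,E)$ with $\norm{f}\leq 1$ and $\restr{f}{K\setminus O}=0$. By the representing measure formula,
\begin{equation*}
(Tf)(t)=T_tf=\int_K f(s)\,dm_t(s).
\end{equation*}
Split $K$ into the three disjoint sets $\{t\}$, $O\setminus\{t\}$ and $K\setminus O$. The integral over $K\setminus O$ vanishes because $f=0$ there; this is checked on simple approximants of $f$ taking the value $0$ off $O$. The integral over $\{t\}$ equals $m_t(\{t\})f(t)=A(t)f(t)$. Hence
\begin{equation*}
(Tf)(t)-A(t)f(t)=\int_{O\setminus\{t\}}f\chi_{O\setminus\{t\}}\,dm_t .
\end{equation*}
The function $f\chi_{O\setminus\{t\}}$ is bounded Borel, vanishes off $O\setminus\{t\}$, and has relatively compact range, since its range lies in $f(K)\cup\{0\}$. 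So \eqref{eq:semivariation-integral} gives the bound $\widetilde m_t(O\setminus\{t\})\norm{f}_\infty$. Taking the supremum over all admissible $f$ yields
\begin{equation*}
\rho_O(t)\leq \widetilde m_t(O\setminus\{t\}).
\end{equation*}

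\emph{Borel measurability.} The plan is to replace the supremum defining $\rho_O$ by a supremum over a countable family $\{f_k\}$. For each fixed $f$, the map $t\mapsto (Tf)(t)$ is continuous. The map $t\mapsto A(t)f(t)$ is Borel by \Cref{lem:borel-evaluation}, using \Cref{lem:atom-borel} for $A$ and continuity of $f$. Therefore
\begin{equation*}
t\mapsto \norm{(Tf)(t)-A(t)f(t)}
\end{equation*}
is Borel, and a countable supremum of such maps is Borel.

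To choose the $f_k$, note that $C(K,E)$ is separable because $K$ is compact metrizable and $E$ is separable. The closed subspace $C_O=\{f:\restr{f}{K\setminus O}=0\}$ is therefore separable as well. Pick a countable set $\{f_k\}$ that is norm-dense in the unit ball of $C_O$.

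Given an admissible $f$, choose $f_{k_j}\to f$ in norm. Then $(Tf_{k_j})(t)\to(Tf)(t)$ and $A(t)f_{k_j}(t)\to A(t)f(t)$ uniformly in $t$, because $\norm{A(t)}\leq\norm{T}$. So the supremum of $\norm{(Tf)(t)-A(t)f(t)}$ over admissible $f$ coincides with the supremum over the $f_k$, and $\rho_O$ is Borel on $O$.

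\emph{Main obstacle.} The delicate step is the decomposition of the representing integral. One must justify that $\int_K f\,dm_t$ splits additively over the Borel partition and that $\int_{\{t\}}f\,dm_t=m_t(\{t\})f(t)$. I would handle this by approximating $f$ uniformly by Borel simple functions subordinate to partitions refining $\{t\}$, $O\setminus\{t\}$ and $K\setminus O$. For such simple functions the identity is immediate. The approximation error is controlled by $\widetilde m_t(K)\leq\norm{T}$, so the identity passes to the limit.
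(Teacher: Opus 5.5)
Your proposal is correct and follows essentially the same route as the paper: the estimate comes from writing $(Tf)(t)-A(t)f(t)=\int_{O\setminus\{t\}}f\,dm_t$ and applying \eqref{eq:semivariation-integral}, and Borel measurability comes from replacing the supremum by one over a countable dense subset of the unit ball of the functions vanishing off $O$, together with \Cref{lem:borel-evaluation}. Your additional justifications fill in details the paper leaves implicit: splitting the representing integral via simple approximants, and checking that the countable supremum equals the full one using $\norm{A(t)}\leq\norm{T}$.
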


\begin{proof}
Let
\begin{equation*}
    Z_O=\{f\in C(K,E):\restr{f}{K\setminus O}=0\}.
\end{equation*}
Since $K$ is metrizable and $E$ is separable, the space $Z_O$ is separable. Choose a norm-dense sequence $(f_j)_{j\in\N}$ in the unit ball of $Z_O$. Then
\begin{equation*}
    \rho_O(t)=\sup_{j\in\N}\norm{(Tf_j)(t)-A(t)f_j(t)}.
\end{equation*}
For each $j$, the map $t\mapsto (Tf_j)(t)$ is continuous. Also, $f_j\colon O\to E$ is Borel, while $A\colon O\to\mathscr{B}(E)$ is strongly Borel and uniformly bounded. Hence, by \Cref{lem:borel-evaluation}, the map
\begin{equation*}
    t\longmapsto A(t)f_j(t)
\end{equation*}
is Borel. Since $\rho_O$ is the pointwise supremum of a countable family of Borel functions, $\rho_O$ is Borel.

If $f\in Z_O$, then the representing measure formula gives
\begin{equation*}
    (Tf)(t)-A(t)f(t)=\int_{O\setminus\{t\}}f(s)\,dm_t(s).
\end{equation*}
By \eqref{eq:semivariation-integral} we have
\begin{equation*}
    \norm{(Tf)(t)-A(t)f(t)}\leq \widetilde m_t(O\setminus\{t\})\norm{f}.
\end{equation*}
Taking the supremum over all $f\in Z_O$ with $\norm{f}\leq 1$ gives the desired estimate.
\end{proof}

The next proposition is the localisation step. It says that the local off-diagonal contribution can be made smaller than any prescribed $\eta>0$ on an uncountable set of points, after choosing a suitable open set $O$.

\begin{proposition}[Localization on an uncountable Borel set]\label{prop:localization}
Let $\eta>0$. There is an open set $O\subseteq K$ such that
\begin{equation*}
    B_O=\{t\in O:\rho_O(t)<\eta\}
\end{equation*}
is an uncountable Borel subset of $K$.
\end{proposition}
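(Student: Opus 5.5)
Fix a countable base $\{O_k\}_{k\in\N}$ of the topology of $K$, closed under finite unions. For every $t\in K$ and every open $O\ni t$, \Cref{lem:local-defect} gives $\rho_O(t)\leq \widetilde m_t(O\setminus\{t\})$. The plan is to show that each point $t$ lies in some basic $O_k$ with $\rho_{O_k}(t)<\eta$. Then
\begin{equation*}
    K=\bigcup_{k\in\N} B_{O_k},
\end{equation*}
and since $K$ is uncountable, some $B_{O_k}$ is uncountable. Each $B_{O_k}$ is Borel, because $\rho_{O_k}$ is Borel on the open set $O_k$ by \Cref{lem:local-defect}.

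To find such an $O_k$ for a given $t$, choose basic open sets $U_n\ni t$ with $\overline{U_{n+1}}\subseteq U_n$ and $\operatorname{diam}U_n\to 0$; this is possible since $K$ is metrizable. The Borel sets $U_n\setminus\{t\}$ decrease to $\varnothing$. By \Cref{lem:no-c0-rows}, $m_t$ is strongly bounded, so \Cref{lem:strong-bounded-semivar} gives $\widetilde m_t(U_n\setminus\{t\})\to 0$. Choosing $n$ with $\widetilde m_t(U_n\setminus\{t\})<\eta$ yields $\rho_{U_n}(t)<\eta$, so $t\in B_{U_n}$ with $U_n$ in the countable family. This is essentially the same mechanism used in the proof of \Cref{lem:atom-borel}.

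The main point needing care is that the open set must be drawn from a fixed countable family independent of $t$, so that the pigeonhole argument works; the countable base handles this. One should also check that the definition of $\rho_O$ requires only $t\in O$, with no condition on how $O$ relates to other points, so using the sets $U_n$ from the base is legitimate.

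Borel measurability of $B_O$ is immediate: $B_O=\rho_O^{-1}([0,\eta))$ is a Borel subset of the open, hence Borel, set $O$, and therefore Borel in $K$. If one prefers, \Cref{lem:dst-input}~\ref{item:dst-cantor} can be noted afterwards to extract a Cantor set, but the proposition itself needs only uncountability.
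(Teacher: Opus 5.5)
Your proposal is correct and takes essentially the paper's approach: it covers $K$ by the countably many sets $B_O$, with $O$ ranging over a fixed countable base, using $\rho_O(t)\le\widetilde m_t(O\setminus\{t\})$ together with strong boundedness of $m_t$, and then applies the pigeonhole principle. The only difference is cosmetic: you take the shrinking neighbourhoods of $t$ directly from the base, whereas the paper uses metric balls $V_n$ and then the monotonicity $\rho_O(t)\le\rho_{V_n}(t)$ for a basic $O\subseteq V_n$; also, your extra conditions (closure under finite unions, $\overline{U_{n+1}}\subseteq U_n$) are harmless but unnecessary.
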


\begin{proof}
Let $\mathcal U$ be a countable base for $K$. For each $t\in K$, choose decreasing open metric balls $V_n$ centered at $t$ with
\begin{equation*}
    \bigcap_{n\in\N} V_n=\{t\}.
\end{equation*}
By \Cref{lem:local-defect},
\begin{equation*}
    \rho_{V_n}(t)\leq \widetilde m_t(V_n\setminus\{t\}).
\end{equation*}
Since the sets $V_n\setminus\{t\}$ decrease to the empty set and $m_t$ is strongly bounded, \Cref{lem:strong-bounded-semivar} gives
\begin{equation*}
    \rho_{V_n}(t)\longrightarrow 0.
\end{equation*}
Choose $n$ with $\rho_{V_n}(t)<\eta$, and then choose $O\in\mathcal U$ with $t\in O\subseteq V_n$. Since $O\subseteq V_n$, the definition gives
\begin{equation*}
    \rho_O(t)\leq \rho_{V_n}(t)<\eta.
\end{equation*}
For each $O\in\mathcal U$, define
\begin{equation*}
    B_O=\{t\in O:\rho_O(t)<\eta\}.
\end{equation*}
The preceding argument shows that
\begin{equation*}
    K=\bigcup_{O\in\mathcal U}B_O.
\end{equation*}
Each $B_O$ is Borel by \Cref{lem:local-defect}. Since $K$ is uncountable and $\mathcal U$ is countable, at least one $B_O$ is uncountable.
\end{proof}

\bigskip
\section{Proof of \Cref{thm:transfer}}\label{sec:transfer}

We now prove \Cref{thm:transfer}. By the comments in \Cref{sec:preliminaries}, it is enough to prove the result when $K$ is the Cantor set.

\begin{proof}[Proof of \Cref{thm:transfer}]
Assume that $E$ has the $C$-primary factorisation property. The case $E=\{0\}$ is immediate, so assume $E\neq\{0\}$.

Fix $T\in\mathscr{B}(C(K,E))$, and put
\begin{equation*}
    \eta=\frac{1}{4C}.
\end{equation*}
Apply \Cref{prop:localization} to obtain an open set $O\subseteq K$ for which
\begin{equation*}
    B_O=\{t\in O:\rho_O(t)<\eta\}
\end{equation*}
is an uncountable Borel subset of $K$. By \Cref{lem:dst-input} \ref{item:dst-cantor}, there is a Cantor set $P\subseteq B_O$.

By \Cref{lem:atom-borel}, the atom field
\begin{equation*}
    A\colon P\longrightarrow \mathscr{B}_{\norm{T}}(E)
\end{equation*}
is strongly Borel. Choose a nonatomic Borel probability measure on $P$ with full support. By \Cref{lem:dst-input} \ref{item:dst-lusin}, and then by \Cref{lem:dst-input} \ref{item:dst-cantor}, after replacing $P$ by a smaller Cantor set we may assume that
\begin{equation*}
    A\colon P\longrightarrow \mathscr{B}_{\norm{T}}(E)
\end{equation*}
is strongly continuous.

For every $t\in P$, apply the $C$-primary factorisation property of $E$ to $A(t)$. Thus, for one of
\begin{equation*}
    A_0(t)=A(t), \qquad A_1(t)=I_E-A(t),
\end{equation*}
there are $L_t,R_t\in\mathscr{B}(E)$ such that
\begin{equation*}
    L_tA_{\sigma(t)}(t)R_t=I_E, \qquad \norm{L_t}\norm{R_t}\leq C.
\end{equation*}
Rescaling the factors, we may assume that
\begin{equation*}
    \norm{L_t}\leq \sqrt C \quad \text{and} \quad \norm{R_t}\leq \sqrt C.
\end{equation*}
By \Cref{lem:continuous-selection}, there are a Cantor set $D\subseteq P$, a fixed $\sigma\in\{0,1\}$, and strongly continuous maps
\begin{equation*}
    L,R\colon D\longrightarrow \mathscr{B}_{\sqrt C}(E)
\end{equation*}
such that
\begin{equation*}
    L(t)A_\sigma(t)R(t)=I_E \qquad(t\in D).
\end{equation*}

Let
\begin{equation*}
    Q_D\colon C(K,E)\longrightarrow C(D,E)
\end{equation*}
be the restriction operator from $K$ to $D$. Since $D\subseteq O$, \Cref{lem:dugundji} gives a supported extension
\begin{equation*}
    J_D\colon C(D,E)\longrightarrow C(K,E)
\end{equation*}
such that
\begin{equation*}
    \norm{J_D}\leq 1,\qquad Q_DJ_D=I_{C(D,E)},\qquad \restr{J_Df}{K\setminus O}=0.
\end{equation*}
Put
\begin{equation*}
    T_0=T,\qquad T_1=I_{C(K,E)}-T,\qquad S_\sigma=Q_DT_\sigma J_D.
\end{equation*}

Since $D\subseteq B_O$, for every $f\in C(D,E)$ with $\norm{f}\leq 1$ and every $t\in D$,
\begin{equation*}
    \norm{(Q_DTJ_Df)(t)-A(t)f(t)}=\norm{(TJ_Df)(t)-A(t)(J_Df)(t)}<\eta.
\end{equation*}

By \Cref{lem:multipliers}, the strongly continuous fields $A_0$ and $A_1$ define multipliers $M_{A_0}$ and $M_{A_1}$ on $C(D,E)$. Since $A_0=A$ and $A_1=I_E-A$, the preceding estimate and the identity $Q_DJ_D=I_{C(D,E)}$ give, for $\sigma \in \{0,1\}$,
\begin{equation*}
    \norm{S_\sigma-M_{A_\sigma}}\leq\eta.
\end{equation*}

By \Cref{lem:multipliers}, the strongly continuous fields $L$ and $R$ define multipliers $M_L$ and $M_R$. Since
\begin{equation*}
    L(t)A_\sigma(t)R(t)=I_E \qquad(t\in D),
\end{equation*}
we have
\begin{equation*}
    M_LM_{A_\sigma}M_R=I_{C(D,E)}.
\end{equation*}
Therefore
\begin{equation*}
    \norm{M_LS_\sigma M_R-I_{C(D,E)}}\leq \norm{M_L}\norm{S_\sigma-M_{A_\sigma}}\norm{M_R}\leq C\eta=\frac14.
\end{equation*}
Set
\begin{equation*}
    G=M_LS_\sigma M_R.
\end{equation*}
Then $G$ is invertible and $\norm{G^{-1}}\leq 4/3$. Hence
\begin{equation*}
    I_{C(D,E)}=G^{-1}M_LQ_D\,T_\sigma\,J_DM_R.
\end{equation*}
Thus the identity on $C(D,E)$ factors through $T_\sigma$ with constant at most $4C/3$.

Since $K$ and $D$ are both Cantor sets, choose a homeomorphism $h\colon K\to D$. The induced composition operator
\begin{equation*}
    W\colon C(K,E)\longrightarrow C(D,E),\qquad Wf=f\circ h^{-1},
\end{equation*}
is an isometric isomorphism. Multiplying the preceding factorisation by $W^{-1}$ on the left and by $W$ on the right gives
\begin{equation*}
    I_{C(K,E)}=W^{-1}G^{-1}M_LQ_D\,T_\sigma\,J_DM_RW.
\end{equation*}
Thus $I_{C(K,E)}$ factors through $T_\sigma$ with constant at most
\begin{equation*}
    \frac{4C}{3}.
\end{equation*}
Since $T\in\mathscr{B}(C(K,E))$ was arbitrary, $C(K,E)$ has the $(4C/3)$-primary factorisation property.
\end{proof}
\bigskip

\noindent\textbf{Acknowledgements.} This paper forms part of the first-named author’s PhD research at Lancaster University, conducted under the supervision of Professor N. J. Laustsen. He acknowledges with thanks the funding from the EPSRC (grant number EP/W524438/1) that has supported his studies. \medskip

\noindent\textbf{AI usage statement.} Large language models, in particular OpenAI's ChatGPT 5.5 Pro, were used during the exploratory stages of this work. It suggested the use of descriptive set-theoretic selection tools, including the Jankov--von Neumann theorem and Lusin's theorem, which became an essential ingredient in the final proof. The authors take full responsibility for the mathematical correctness of the paper. \medskip

For the purpose of open access, the author has applied a Creative Commons Attribution (CC BY) licence to any Author Accepted Manuscript version arising. \medskip

\noindent\textbf{Data availability.} No data was used for the research described in the article.

\end{document}